\documentclass{article}
\usepackage{graphicx} 

\usepackage{graphicx}%
\usepackage{multirow}%
\usepackage{amsmath,amssymb,amsfonts}%
\usepackage{amsthm}%
\usepackage{mathrsfs}%
\usepackage[title]{appendix}%
\usepackage{xcolor}%
\usepackage{textcomp}%
\usepackage{manyfoot}%
\usepackage{booktabs}%
\usepackage{algorithm}%
\usepackage{algorithmicx}%
\usepackage{algpseudocode}%
\usepackage{listings}%

\usepackage [latin1]{inputenc}

\usepackage{amsmath}
\usepackage{cleveref}
\usepackage{amssymb}
 \usepackage{graphicx}
 \usepackage{pgfplots}
\usepgfplotslibrary{groupplots}
\pgfplotsset{compat=1.12}
\usepackage{pgf,tikz}
\usepackage{mathrsfs}
\usetikzlibrary{arrows}
\usetikzlibrary{arrows,shapes,positioning}
\usepackage{caption}
\usepackage{tikz-cd}
\newcommand{\gennum}[1]{\left<#1\right>}
\newcommand{\Use}{\mathcal{\Bar{U}}(\l_t)}
\newcommand{\wtp}{\widetilde{\pi}_n}
\newcommand{\Uti}{\widetilde{\mathcal{U}}}
\newcommand{\va}[1]{\left|#1\right|}
\let \l \lambda
\newcommand{\mup}[1]{\widetilde{\mathcal{U}}_{#1}}

\newtheorem{lemma}{Lemma}
\newtheorem{corollary}{Corollary}
\newtheorem{theorem}{Theorem}
\newtheorem{example}{Example}
\newtheorem{remark}{Remark}
\newtheorem{definition}{Definition}
\newtheorem{proposition}{Proposition}

\title{Unrefinable Partitions into Distinct Parts and Numerical Semigroups}
\author{Lorenzo Campioni\\
lorenzo.campioni1@graduate.univaq.it\\
Disim, Universit\`a degli Studi dell'Aquila, Italy}
\date{}

\begin{document}

\maketitle

\abstract{This article investigates structural connections between unrefinable partitions into distinct parts and numerical semigroups. By analysing the hooksets of Young diagrams associated with numerical sets, new criteria for recognising unrefinable partitions are established.\\
A correspondence between missing parts and the gaps of numerical semigroups is developed, extending previous classifications and enabling the characterisation of partitions with maximal numbers of missing parts. In particular, the results show that certain families of unrefinable partitions correspond precisely to symmetric numerical semigroups when the maximal part is prime. Further structural consequences, examples, and a decomposition of unrefinable partitions by minimal excludant are discussed, together with implications for the study of maximal unrefinable partitions.}\\

\textbf{keywords}:Integer partitions into distinct parts, Numerical semigroups, Combinatorics.

\section{Introduction}\label{sec1}

Unrefinable partitions into distinct parts were introduced in \cite{ACGS3}, where the authors established a connection with the generators of a chain of normalizers. This chain of normalizers was initially defined in \cite{ACGS1} starting from the normalizer $N_n^0$ in $Sym(2^n)$ of the translation group on $\mathbb{F}_2^n$ and subsequently defining the $i$-th term $N^i_n$ as $N_{Sym(2^n)}(N_n^{i-1})$. In \cite{ACGS2} the authors proved that the quantity $\log_2\va{N_n^i\colon N_n^{i-1}}$ is independent of $n$ for $1\leq i\leq n-2$ and in particular it is equal to the $(i+2)$-th term of the sequence of the partial sums of the sequence $\{b_j\}$, where $b_j$ denotes the number of partitions of $j$ into at least two distinct parts. Finally, in \cite{ACGS3}, it was observed that $\log_2\va{N_n^{n-1}\colon N_n^{n-2}}$ is related to the number of unrefinable partitions of $n$ satisfying a condition on their minimal excludant, i.e., the smallest positive integer that does not appear in the partition.\\

 The condition of being unrefinable imposes a non-trivial constraint on the possible distributions of the parts in a partition. In \cite{ACCL1}, Aragona et al. constructed two algorithms: one to determine whether a given sequence is an unrefinable partition and the other to enumerate all unrefinable partitions of a given weight. Later, the same authors in \cite{ACCL2} provided a classification of maximal unrefinable partitions for triangular numbers, and in \cite{ACC}, they extended this classification to the general case. These classifications establish explicit bijections between unrefinable partitions and suitable subsets of partitions into distinct parts.\\
 
In this paper, we prove a connection between unrefinable partitions and numerical semigroups, which are additive submonoids of the non-negative integers that contain $0$ and have finite complement in $\mathbb{N}_0$. This relationship allows us to develop new criteria for recognising unrefinable partitions, by analysing the hooksets of the Young tableaux associated with the corresponding numerical semigroup (see \Cref{condsemi}). Moreover, this connection enables the description of further subsets of unrefinable partitions that fix the largest part and maximise the number of missing parts. In this setting, we identify a relation between these subsets and the set of symmetric numerical semigroups $(SNS)$, particularly when the largest part $\l_t$ is a prime number.\\

The remainder of the paper is organized as follows: in Section \ref{preliminari}, we introduce some basic properties of unrefinable partitions and numerical semigroups. Section \ref{finale} is dedicated to exploring and describing the relationships between them.

\section{Preliminaries}\label{preliminari}

\subsection{Unrefinable Partitions into Distinct Parts}

\begin{definition}
    Let $\l=(\l_1,\ldots,\l_t)$ be a partition into distinct parts, i.e., $\l_1<\l_2<\cdots<\l_t$, with $t\geq2$. The \textbf{set of missing parts} of $\l$, denoted $\mathcal{M}_{\l}=\{\mu_1,\ldots,\mu_m\}$, consists of all integers less than or equal to $\l_t$ that do not appear in $\l$:
    \[
    \mathcal{M}_{\l}\colon=\{1,2,\ldots,\l_t\}\setminus\{\l_1,\ldots,\l_t\}.
    \]
    The smallest missing part is called the \textbf{minimal excludant} of $\l$, written as $\mathrm{mex}(\l)=\mu_1$, with the convention $\mathrm{mex}(\l)=0$ if $\mathcal{M}_{\l}=\emptyset$.
\end{definition}

\begin{definition}
    Let $N \in \mathbb{N}$. A partition $\l=(\l_1,\ldots,\l_t)$ of $N$ into distinct parts, with missing parts $\mu_1 < \mu_2 < \dots <\mu_m$, is said to be \textbf{refinable} if there exist indices $1 \leq \ell \leq t$ and $1\leq i_1<\ldots<i_k \leq m $, with $k\geq 2$, such that $\mu_{i_1}+\cdots+\mu_{i_k} = \l_{\ell}$. Otherwise, $\l$ is \textbf{unrefinable}. The set of unrefinable partitions is denoted by $\mathcal{U}$, and $\mathcal{U}_N$ denotes the subset of partitions of $N$.
\end{definition}

For example, $\l=(1,2,3,5,6,8,9,11,13)$ is refinable since $11 = 4+7$, while $\l=(1,2,3,5,6,8,9,13)$ is unrefinable.  
The condition of being unrefinable imposes non-trivial constraints on the largest part and on the distribution of parts.

\begin{lemma}\label{M=0,1}
    Let $\l=(\l_1,\ldots,\l_t)$ be a partition into distinct parts. If $\#\mathcal{M}_{\l}=\{0,1\}$, then $\l\in\mathcal{U}$.
\end{lemma}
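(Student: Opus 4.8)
The plan is to argue directly from the definition of refinability, since the hypothesis is so restrictive that no nontrivial refinement can even be attempted. Writing $m = \#\mathcal{M}_{\l}$, recall that $\l$ is refinable precisely when there exist a part $\l_{\ell}$ and indices $1 \le i_1 < \dots < i_k \le m$ with $k \ge 2$ such that $\mu_{i_1} + \dots + \mu_{i_k} = \l_{\ell}$. The essential observation is that the requirement $k \ge 2$ forces one to select at least two distinct missing parts, and this is exactly what the hypothesis $\#\mathcal{M}_{\l} \in \{0,1\}$ rules out.

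Concretely, I would split into the two admissible cases. If $\#\mathcal{M}_{\l} = 0$, then $\mathcal{M}_{\l} = \emptyset$ and $\l = (1, 2, \dots, \l_t)$ is the full staircase; there are no missing parts to sum, so the refinability condition is vacuously unsatisfiable and $\l \in \mathcal{U}$. If $\#\mathcal{M}_{\l} = 1$, say $\mathcal{M}_{\l} = \{\mu_1\}$, then any putative refinement would require choosing $k \ge 2$ distinct indices from the single-element index set $\{1\}$, which is impossible; hence no part can be written as a sum of at least two distinct missing parts, and again $\l \in \mathcal{U}$.

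I do not anticipate any genuine obstacle: the statement follows immediately from unpacking the definition, the only point worth stressing being that the $k \ge 2$ clause is precisely the condition that cannot be met when fewer than two missing parts are present. This lemma presumably serves as a degenerate boundary case for the finer analysis of missing parts carried out later in the paper.
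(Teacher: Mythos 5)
Your argument is correct and is essentially the paper's own proof: the paper also observes in one line that no part can be written as a sum of two or more distinct missing parts when $\#\mathcal{M}_{\l}\in\{0,1\}$, so the refinability condition (which requires $k\ge 2$) cannot be met. Your version merely makes the two cases explicit, which is a harmless elaboration of the same idea.
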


\begin{proof}
    No part of the partition can be expressed as a sum of two or more distinct missing parts, hence $\l$ is unrefinable.
\end{proof}

The converse does not hold in general. For instance, $\l=(1,2,3,5,6,8,9,13)$ is unrefinable with $\#\mathcal{M}_\l=5$.\\

The following definition will be useful in the formulation of the next property and in the derivation of subsequent results.

\begin{definition}\label{Triang}
    For $n \in \mathbb{N}$ the \textbf{n-th triangular number} is 
    \[
    T_n= \sum_{i=1}^n i = \dfrac{n(n+1)}{2}.
    \]
    The \textbf{ complete partition} $\pi_n=(1,2,\ldots,n)$ is the partition of $T_n$ with no missing parts. For $n\geq 3$ and $1\leq d\leq n-1$, let $T_{n,d}=T_n-d$ and define $\pi_{n,d}=(1,\ldots,d-1,d+1,\ldots,n)\vdash T_{n,d}$.
\end{definition}

Note that $T_{n-1} < T_{n,d} < T_n$ for every $n\ge 3$ and $1\le d\le n-1$.

\begin{corollary}
    For every $N\in\mathbb{N}$ such that $N>2$, the set $\mathcal{U}_N$ is nonempty. 
\end{corollary}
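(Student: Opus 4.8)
The plan is to exhibit, for each $N>2$, an explicit partition into distinct parts whose set of missing parts has cardinality at most one, so that \Cref{M=0,1} immediately certifies it to be unrefinable. The two families introduced in \Cref{Triang} are exactly what is needed: the complete partitions $\pi_n=(1,\ldots,n)$, which have no missing part and weight $T_n$, and the partitions $\pi_{n,d}=(1,\ldots,d-1,d+1,\ldots,n)$, which have the single missing part $d$ and weight $T_{n,d}=T_n-d$. By \Cref{M=0,1} every member of both families is unrefinable, so the whole task reduces to showing that their weights exhaust every integer greater than $2$.

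First I would argue that these weights sweep out all of $\{N : N\ge 3\}$. The triangular numbers $T_n$ with $n\geq 2$ account for every $N$ of the form $T_n\geq 3$. For the remaining values I would invoke the remark following \Cref{Triang}: as $d$ runs over $1,\ldots,n-1$ the quantity $T_{n,d}=T_n-d$ runs in decreasing order over $T_n-1,T_n-2,\ldots,T_n-(n-1)$, and a one-line computation gives $T_n-(n-1)=T_{n-1}+1$. Hence for each fixed $n\geq 3$ the weights $T_{n,d}$ realise precisely the integers strictly between the consecutive triangular numbers $T_{n-1}$ and $T_n$. Taking the union over $n\geq 3$ together with the triangular numbers themselves covers every $N\ge 3$.

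It then remains to assemble the cases. Given $N>2$, either $N=T_n$ for some $n\geq 2$, in which case $\pi_n\in\mathcal{U}_N$; or $N$ is not triangular, in which case $T_{n-1}<N<T_n$ for a unique $n$, and since $N\geq 4$ forces $T_n\geq 6$ and hence $n\geq 3$, the definition of $\pi_{n,d}$ applies and there is a unique $d\in\{1,\ldots,n-1\}$ with $N=T_{n,d}$, giving $\pi_{n,d}\in\mathcal{U}_N$. In both cases $\mathcal{U}_N\neq\emptyset$.

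The argument is essentially bookkeeping, so the only points demanding care are the boundary conditions built into the definitions: one must check that the chosen partition really has at least two parts (the complete partition $\pi_n$ has $n\geq 2$ parts, and $\pi_{n,d}$ has $n-1\geq 2$ parts once $n\geq 3$) and that the constraint $n\geq 3$ in \Cref{Triang} is never violated. This last point is exactly where the hypothesis $N>2$ is used: the single integer that the $\pi_{n,d}$ family cannot reach is $N=2$, which would require $n=2$, and indeed $2$ admits no partition into at least two distinct parts, so its exclusion is forced rather than an artefact of the method.
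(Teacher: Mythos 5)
Your proof is correct and follows exactly the paper's route: split on whether $N$ is triangular, use $\pi_n$ in the triangular case and $\pi_{n,d}$ otherwise, and invoke \Cref{M=0,1} since both families have at most one missing part. You merely spell out the covering argument ($T_{n,d}$ sweeping the interval between $T_{n-1}$ and $T_n$) and the boundary checks that the paper leaves implicit.
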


\begin{proof}
    If $N=T_n$, then $\pi_n \in \mathcal{U}_N$ by Lemma~\ref{M=0,1}. Otherwise, there exists $1\le d\le n-1$ such that $N=T_{n,d}$, and $\pi_{n,d}\in\mathcal{U}_N$.
\end{proof}

The following result bounds the number of missing parts in an unrefinable partition.

\begin{lemma}\label{lemma:bound}
    Let $\l=(\l_1,\ldots,\l_t)$ be an unrefinable partition and let $\mu_1<\cdots<\mu_m$ be the missing parts. Then the number of missing parts $m$ satisfies the following upper bound:
    \begin{equation}\label{bound1}
            m \leq \left\lfloor \dfrac{\l_t}{2}\right\rfloor.
    \end{equation}
\end{lemma}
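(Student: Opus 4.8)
The plan is to reduce the claimed inequality to the statement $m\le t$, and then to prove the latter by a reflection argument. First I would record the elementary identity $m+t=\lambda_t$: the present parts $\lambda_1<\cdots<\lambda_t$ and the missing parts $\mu_1<\cdots<\mu_m$ together partition the set $\{1,2,\ldots,\lambda_t\}$, and since $\lambda_t$ is itself a part, every missing part is strictly smaller than $\lambda_t$. Because $m$ is an integer, the inequality $m\le\lfloor\lambda_t/2\rfloor$ is then equivalent to $2m\le m+t$, i.e.\ to $m\le t$. Thus it suffices to show that there are at least as many present parts as missing parts.

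The core step is to build an injection from the missing parts into the present parts via the reflection $r(k)=\lambda_t-k$ on $\{1,\ldots,\lambda_t-1\}$. The key observation, which is where unrefinability enters, is the following: if $\mu$ is a missing part with $\mu\ne\lambda_t/2$, then $\lambda_t-\mu$ must be a present part. Indeed, $\mu$ and $\lambda_t-\mu$ are then two \emph{distinct} integers of $\{1,\ldots,\lambda_t-1\}$; were $\lambda_t-\mu$ also missing, the identity $\mu+(\lambda_t-\mu)=\lambda_t$ would express the part $\lambda_t$ as a sum of two distinct missing parts, contradicting $\lambda\in\mathcal{U}$.

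To conclude, I would count. The map $r$ is an involution, hence injective, and it has a unique fixed point $\lambda_t/2$, occurring only when $\lambda_t$ is even. By the observation above, $r$ sends every missing part other than this possible fixed point into the set of present parts lying in $\{1,\ldots,\lambda_t-1\}$, which has exactly $t-1$ elements (all parts except $\lambda_t$). If the fixed point is itself missing, discarding it removes at most one element, giving $m-1\le t-1$; otherwise $m\le t-1$. In either case $m\le t$, which yields the stated bound.

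The main obstacle — indeed essentially the only delicate point — is the fixed point $\lambda_t/2$ of the reflection, the single value the pairing argument does not directly control. I would dispose of it by the bookkeeping above, noting that it costs at most one unit in the count and so does not affect the final inequality. I would also take care to verify that the images $\lambda_t-\mu$ genuinely land among the parts strictly below $\lambda_t$ rather than equalling $\lambda_t$ itself, which is what makes $t-1$, and not $t$, the correct size of the target set.
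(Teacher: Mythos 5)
Your proof is correct and rests on the same key idea as the paper's: the pairing $\mu \mapsto \lambda_t - \mu$, which unrefinability of the part $\lambda_t$ forces to send missing parts to present parts. Your write-up is in fact more careful than the paper's one-line count, since you make the bookkeeping explicit via $m+t=\lambda_t$ and handle the fixed point $\lambda_t/2$ of the reflection, a case the paper's phrasing silently glosses over.
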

\begin{proof}
    Each missing part $\mu_i$ must have a counterpart $\l_t - \mu_i \in \l$, otherwise $\l_t = (\l_t - \mu_i) + \mu_i$ would yield a refinement.  
    Considering the complete partition $\pi_{\l_t}$ and removing as many parts as possible different from $\l_t$ gives the stated bound.
\end{proof}

\begin{proposition}\label{proposition:simplerefinement}
    If $\l$ has a refinement, its smallest refinable part $\l_r$ can be written as $\l_r=a+b$, where $a,b\in\mathcal{M}_{\l}$.
\end{proposition}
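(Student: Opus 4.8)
The plan is to begin from an arbitrary representation of the smallest refinable part as a sum of distinct missing parts and to collapse it to a two-term sum, letting the minimality of $\l_r$ do the essential work. By the definition of a refinement, the smallest refinable part $\l_r$ admits an expression
\[
\l_r = \mu_{i_1} + \mu_{i_2} + \cdots + \mu_{i_k}, \qquad \mu_{i_1} < \mu_{i_2} < \cdots < \mu_{i_k}, \quad k \geq 2,
\]
with each $\mu_{i_j} \in \mathcal{M}_{\l}$. If $k = 2$ there is nothing to prove, so I would assume $k \geq 3$ and argue that $\l_r$ nevertheless decomposes into exactly two missing parts.

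The key step is to peel off the \emph{smallest} summand and examine the remainder $c := \mu_{i_2} + \cdots + \mu_{i_k} = \l_r - \mu_{i_1}$. Since $\mu_{i_1} > 0$ we have $c < \l_r \leq \l_t$, and since $c \geq \mu_{i_2} > 0$ the integer $c$ lies in $\{1, \ldots, \l_t\}$. This set is partitioned into the parts of $\l$ and the missing parts $\mathcal{M}_{\l}$, so $c$ must be one or the other. If $c$ were a part, say $c = \l_j$, then $\l_j = \mu_{i_2} + \cdots + \mu_{i_k}$ would be a sum of $k-1 \geq 2$ distinct missing parts, making $\l_j$ a refinable part with $\l_j = c < \l_r$, contradicting the choice of $\l_r$ as the smallest refinable part. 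Hence $c \in \mathcal{M}_{\l}$, and the identity $\l_r = \mu_{i_1} + c$ exhibits $\l_r$ as a sum of two missing parts.

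It remains to check that the two summands are genuinely distinct, and this is precisely where stripping the smallest term (rather than, say, the largest) matters: because $k-1 \geq 2$, the remainder satisfies $c = \mu_{i_2} + \cdots + \mu_{i_k} > \mu_{i_2} > \mu_{i_1}$, so $c \neq \mu_{i_1}$ and $\l_r = \mu_{i_1} + c$ is a sum of two distinct missing parts. I expect the main obstacle to be exactly this distinctness bookkeeping: a naive grouping can collapse to $2\mu$ for a single missing part $\mu$ (for instance, peeling off the top of $\mu_{i_1}+\mu_{i_2}+\mu_{i_3}$ when $\mu_{i_1}+\mu_{i_2}=\mu_{i_3}$), which is avoided by removing the smallest summand, since the surviving block is then forced to exceed $\mu_{i_1}$. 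Everything else rests on the two simple observations already used: that $\{1,\dots,\l_t\}$ splits as parts together with missing parts, and that any strictly smaller part admitting a refinement would violate the minimality of $\l_r$.
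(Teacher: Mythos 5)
Your proof is correct and follows essentially the same route as the paper: peel off one summand, observe that the remainder cannot be a part of $\l$ by minimality of $\l_r$, hence it is a missing part. Your extra care in peeling off the \emph{smallest} summand to guarantee the two resulting missing parts are distinct is a detail the paper's own proof leaves implicit, and it is a worthwhile addition since the definition of refinability requires distinct missing parts.
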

\begin{proof}
    Let $\l_r$ be the smallest refinable part and $\l_r = \mu_{i_1} + \cdots + \mu_{i_t}$ a refinement.  
    If $t=2$, the statement holds. Otherwise, set $a = \mu_{i_1}$ and $b = \mu_{i_2}+\cdots+\mu_{i_t}$.  
    If $b \in \l$, then $b < \l_r$ would be refinable, contradicting the minimality of $\l_r$. Hence $b \notin \l$, proving the claim.
\end{proof}

\begin{remark}\label{forbele}
Let $\l$ be an unrefinable partition, and suppose $\mu_1=\mathrm{mex}(\l)$ and $\mu_i\in\mathcal{M}_{\l}$. By definition, the element $\mu_i+\mu_1$ does not belong to $\l$. Analogously, the element $\mu_i+2\mu_1=(\mu_i+\mu_1)+\mu_1\notin\l$ and, by induction, any $x\equiv \mu_i\pmod{\mu_1}$ with $x\geq\mu_i$ is not contained in $\l$.
\end{remark}

To analyse the structure of unrefinable partitions, a vector $\Vec{p}_\l$ is introduced. It records, for each residue class modulo the minimal excludant $\mu_1 = \mathrm{mex}(\l)$, thresholds associated with the arithmetic relations generated by the missing parts. These thresholds allow to check refinability once the vector is constructed.\\
 
Let $\l$ be a partition with missing parts 
$\mathcal{M}_\l = \{\mu_1<\mu_2<\dots<\mu_m\}$.  
The vector 
\[
\Vec{p}_\l = (p_0,p_1,\dots,p_{\mu_1-1})
\]
is constructed iteratively, following the strategy introduced in \cite{ACCL1}. The method is adapted to the present setting by modifying some of the technical steps originally designed for computational efficiency, while preserving the overall structure of the construction. All entries are initially set to $\infty$.

Each missing part $\mu_i$ with $i\ge 2$ produces the arithmetic progression
\[
\mu_1 + k\mu_i, \qquad k\ge 1,
\]
whose terms fall into well-defined residue classes modulo $\mu_1$. Each integer of the progression is considered as a potential contributor to a refinement. If such element appears in $\l$, this provides a refinement in $\l$. Therefore, the progression terms are used to update the corresponding entry of $\Vec{p}_\l$.\\

The residue classes affected by the progression depend on $\gcd(\mu_1,\mu_i)$, according to the following cases:

\begin{itemize}
    \item If $\gcd(\mu_1,\mu_i)=1$, the progression covers all residue classes modulo $\mu_1$, and each term may update the corresponding entry of $\Vec{p}_\l$. Then $1\leq k\leq \mu_1$.
    
    \item If $\gcd(\mu_1,\mu_i)=d\notin\{1,\mu_1\}$, then the progression occupies exactly the residue classes of the subgroup generated by $\mu_i$ in $\mathbb{Z}/\mu_1\mathbb{Z}$, and only those entries $p_j$ may be updated. Then $1\leq k\leq \frac{\mu_1}{d}$.
    
    \item If $\gcd(\mu_1,\mu_i)=\mu_1$, all integers of the form $\mu_1 + k \mu_i$ lie in the zero residue class, so only $p_0$ may be updated.
\end{itemize}

For each subsequent missing part $\mu_i$, let $r \equiv \mu_i \pmod{\mu_1}$. If the current entry $p_r$ is already smaller than $\mu_i$, no update is required. Otherwise, $p_r$ is set to $\mu_i$, and the following steps are performed:

\begin{enumerate}
    \item \textbf{Progression updates.} Update the entries of $\Vec{p}_\lambda$ using all integers of the progression  
    \[
        \mu_1 + k\mu_i, \qquad k \ge 1,
    \]
    according to the same rules as above;

    \item \textbf{Mixed sum updates.} For each entry $p_j$ in a different residue class from $\mu_i$, update the entry corresponding to the residue class of $p_j + \mu_i$ modulo $\mu_1$;

    \item \textbf{Closure under new values.} Each newly updated entry $p_s$ is processed in the same way: for all sums of the form  
    \[
        p_s + p_j
    \]
     with $p_j$ in a different residue class, update the corresponding entries if a smaller value is obtained.
\end{enumerate}

Repeat this iterative procedure for all subsequent missing parts $\mu_i$.

\begin{definition}\label{plambda}
Let $\l$ be a partition into distinct parts. The vector $\Vec{p}_\l$, constructed as above, is called the \textbf{vector of forbidden elements}. For each residue class $0\le i\le \mu_1-1$, the entry $p_i$ is the smallest integer congruent to $i$ modulo $\mu_1$ that, if it were present in $\l$, could participate in a refinement of the partition.
\end{definition}

\begin{remark}
Each entry $p_i$ can be seen as a threshold. A partition $\l$ is unrefinable if and only if all its parts are strictly smaller than the corresponding $p_i$. Any part $\l_\ell \ge p_i$ could participate in a refinement, making $\l$ refinable. Hence, the vector provides a simple criterion to check refinability.
\end{remark}

The following example illustrates the procedure.

\begin{example}
    Consider a partition $\l$ with $\mathrm{mex}(\l)=6$.\\
    The vector $\Vec{p}_\l$ of length $\mu_1 = 6$ is initialized to
    $$
    \begin{tikzpicture}
        \draw (0,0) rectangle(0.5,-0.5) rectangle (1,0) rectangle(1.5,-0.5) rectangle (2,0) rectangle(2.5,-0.5) rectangle (3,0);
        \node at (0.25,-0.25) {$\infty$};
        \node at (0.75,-0.25) {$\infty$};
        \node at (1.25,-0.25) {$\infty$};
        \node at (1.75,-0.25) {$\infty$};
        \node at (2.25,-0.25) {$\infty$};
        \node at (2.75,-0.25) {$\infty$};
        \node at (-0.5,-0.25) {$\Vec{p}_{\l}=$};
    \end{tikzpicture}.
    $$

    Assume $\mu_2=7$, then set $p_1=7$. The arithmetic progression $6 + k \cdot 7$, $1\leq k \le \mu_1$, is computed and the vector is updated according to the residue classes of each term, yielding
    $$
    \begin{tikzpicture}
        \draw (0,0) rectangle(0.5,-0.5) rectangle (1,0) rectangle(1.5,-0.5) rectangle (2,0) rectangle(2.5,-0.5) rectangle (3,0);
        \node at (0.25,-0.25) {$48$};
        \node at (0.75,-0.25) {$7$};
        \node at (1.25,-0.25) {$20$};
        \node at (1.75,-0.25) {$27$};
        \node at (2.25,-0.25) {$34$};
        \node at (2.75,-0.25) {$41$};
        \node at (-0.5,-0.25) {$\Vec{p}_{\l}=$};
    \end{tikzpicture}.
    $$

    Assume $\mu_3=9$. Its residue class modulo $6$ is $3$ and $p_3=27>9$, then $p_3$ is set to $9$. The progression $6 + k \cdot 9$ updates the vector to
    $$
    \begin{tikzpicture}
        \draw (0,0) rectangle(0.5,-0.5) rectangle (1,0) rectangle(1.5,-0.5) rectangle (2,0) rectangle(2.5,-0.5) rectangle (3,0);
        \node at (0.25,-0.25) {$24$};
        \node at (0.75,-0.25) {$7$};
        \node at (1.25,-0.25) {$20$};
        \node at (1.75,-0.25) {$9$};
        \node at (2.25,-0.25) {$34$};
        \node at (2.75,-0.25) {$41$};
        \node at (-0.5,-0.25) {$\Vec{p}_{\l}=$};
    \end{tikzpicture}.
    $$

    Next, the mixed sum is applied: for each $p_j$ in a different residue class from $p_3$, $\mu_3 = 9$ is added and the corresponding entry is updated if smaller, giving
    $$
    \begin{tikzpicture}
        \draw (0,0) rectangle(0.5,-0.5) rectangle (1,0) rectangle(1.5,-0.5) rectangle (2,0) rectangle(2.5,-0.5) rectangle (3,0);
        \node at (0.25,-0.25) {$24$};
        \node at (0.75,-0.25) {$7$};
        \node at (1.25,-0.25) {$20$};
        \node at (1.75,-0.25) {$9$};
        \node at (2.25,-0.25) {$16$};
        \node at (2.75,-0.25) {$29$};
        \node at (-0.5,-0.25) {$\Vec{p}_{\l}=$};
    \end{tikzpicture}.
    $$

    The newly updated entries are processed iteratively, summing them with entries in different residue classes, which produces
    $$
    \begin{tikzpicture}
        \draw (0,0) rectangle(0.5,-0.5) rectangle (1,0) rectangle(1.5,-0.5) rectangle (2,0) rectangle(2.5,-0.5) rectangle (3,0);
        \node at (0.25,-0.25) {$24$};
        \node at (0.75,-0.25) {$7$};
        \node at (1.25,-0.25) {$20$};
        \node at (1.75,-0.25) {$9$};
        \node at (2.25,-0.25) {$16$};
        \node at (2.75,-0.25) {$23$};
        \node at (-0.5,-0.25) {$\Vec{p}_{\l}=$};
    \end{tikzpicture}.
    $$

    Finally, for $\mu_4 = 13$, the entry corresponding to $\mu_4$ modulo $6$ is already $p_1 = 7$, so no update is required.\\
    Consider the partition $\l=(1,2,3,4,5,8,10,11,12,14,15,17,18)$, note that $\mu_1=6$, $\mu_2=7$, $\mu_3=9$ and $\mu_4=13$. It is refinable because the part $15\equiv 3 \pmod{6}$ exceeds $p_3=9$ and can be expressed as a sum of missing parts: $15 = 6 + 9$. \\
    In contrast, the partition $\l = (1,2,3,4,5,8,10,11,12,14,17)$ is unrefinable, since all parts are strictly smaller than the corresponding thresholds in $\Vec{p}_\l$. No part can be written as a sum of missing parts while preserving distinctness.
\end{example}

For completeness, the following cases are also described.

\begin{example}
    Let $\l$ be a partition such that $\mathrm{mex}(\l)=6$.\\
    If $\mu_2=10$, then $\gcd(6,10)=2$. The vector $\Vec{p}_\l$ after processing $\mu_2$ is
$$
    \begin{tikzpicture}
        \draw (0,0) rectangle(0.5,-0.5) rectangle (1,0) rectangle(1.5,-0.5) rectangle (2,0) rectangle(2.5,-0.5) rectangle (3,0);
        \node at (0.25,-0.25) {$36$};
        \node at (0.75,-0.25) {$\infty$};
        \node at (1.25,-0.25) {$26$};
        \node at (1.75,-0.25) {$\infty$};
        \node at (2.25,-0.25) {$10$};
        \node at (2.75,-0.25) {$\infty$};
        \node at (-0.5,-0.25) {$\Vec{p}_{\l}=$};
        \end{tikzpicture}.
$$
If $\mu_2=18$, then $\gcd(6,18)=6$. The vector $\Vec{p}_\l$ becomes
$$
    \begin{tikzpicture}
        \draw (0,0) rectangle(0.5,-0.5) rectangle (1,0) rectangle(1.5,-0.5) rectangle (2,0) rectangle(2.5,-0.5) rectangle (3,0);
        \node at (0.25,-0.25) {$18$};
        \node at (0.75,-0.25) {$\infty$};
        \node at (1.25,-0.25) {$\infty$};
        \node at (1.75,-0.25) {$\infty$};
        \node at (2.25,-0.25) {$\infty$};
        \node at (2.75,-0.25) {$\infty$};
        \node at (-0.5,-0.25) {$\Vec{p}_{\l}=$};
        \end{tikzpicture}.
$$
\end{example}

\begin{remark}
    In the previous example, some entries of the vector $\Vec{p}_\l$ remain equal to $\infty$. This indicates that, in principle, additional parts could be inserted in the partition respecting the corresponding residue classes, without immediately generating a refinement. Such a situation motivates the technical definition of unrefinable sequences of parts.
\end{remark}

\begin{definition}\label{def:unrefinable_sequence}
    Let $\l=(v_1,v_2,\dots)$ be a finite or infinite sequence of distinct positive integers. The sequence $\l$ is called an \textbf{unrefinable sequence of parts} if no element of $\l$ can be expressed as a sum of two or more distinct integers not contained in the sequence. In the finite case, these integers coincide with the usual missing parts of a partition of a number.
\end{definition}

\begin{example}
    Consider the sequence of all odd numbers, $\l=(1,3,5,\dots)$. 
    All missing parts are even, so their sums cannot produce new odd parts. Therefore, $\l$ forms an infinite unrefinable sequence.
\end{example}

\begin{definition}\label{def:saturated}
  Let $\l=(v_1,\ldots,v_l)$ be an unrefinable sequence of parts with
  $\mathrm{mex}(\l)=\mu_1$. The vector $\Vec{p}_{\l}$ is \textbf{saturated} if all its entries are finite, that is,
  \begin{equation*}
   \#\{p_{j} \leq l \;\mid\; 0 \leq j \leq \mu_1-1 \} = \mu_1.
  \end{equation*}
\end{definition}

\begin{lemma}
    Let $\l$ be an unrefinable sequence of parts with $\#\mathcal{M}_{\l}\ge 2$. If there exists another missing part $\mu_i$ such that $\gcd(\mu_1,\mu_i)=1$, then $\Vec{p}_{\l}$ is saturated and $\l$ has finite length. Otherwise, the sequence may continue indefinitely, giving an infinite unrefinable sequence.
\end{lemma}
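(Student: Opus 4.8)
The plan is to treat the statement as a dichotomy governed by the minimal excludant $\mu_1=\mathrm{mex}(\l)$, and to reduce both directions to a single structural observation. Because $\l$ is unrefinable, the set of missing parts is closed under sums of distinct elements: if $x=\mu_{i_1}+\dots+\mu_{i_k}$ with $k\ge 2$ and the $\mu_{i_j}\in\mathcal{M}_\l$ pairwise distinct, then $x\notin\l$ (otherwise $x$ would refine), hence $x\in\mathcal{M}_\l$. Equivalently, every integer expressible as a sum of at least two distinct missing parts is forbidden from $\l$. This closure is what lets me generate new forbidden elements while keeping track of distinctness without expanding each element into primitive summands.

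For the first direction, assume there is a missing part $\mu_i$ with $\gcd(\mu_1,\mu_i)=1$ (so $\mu_i>\mu_1$, as $\mu_1$ is minimal). First I would prove, by induction via the closure, that $a_k:=k\mu_1+\mu_i\in\mathcal{M}_\l$ for every $k\ge 0$, noting that for $k\ge 1$ the identity $a_k=\mu_1+a_{k-1}$ exhibits $a_k$ as a sum of the two distinct missing parts $\mu_1$ and $a_{k-1}$; thus the residue class $\mu_i\bmod\mu_1$ is forbidden from $a_1$ onward. The core step is then an induction on $s=1,\dots,\mu_1$ showing that the class $s\mu_i\bmod\mu_1$ is eventually forbidden: given a large $y\in\mathcal{M}_\l$ with $y\equiv (s-1)\mu_i$ (available from the previous step) and a fixed $a_{k_0}\ne y$, the sum $y+a_{k_0}\equiv s\mu_i\pmod{\mu_1}$ is a sum of two distinct missing parts, hence forbidden, and letting $y$ run through the eventually-all missing parts of its class fills the class $s\mu_i$ from some point on. Since $\gcd(\mu_1,\mu_i)=1$, the residues $s\mu_i\bmod\mu_1$ for $s=1,\dots,\mu_1$ exhaust all $\mu_1$ classes (with $s=\mu_1$ giving class $0$), so every entry $p_j$ is finite and $\Vec{p}_\l$ is saturated. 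Finiteness of $\l$ follows at once: in each class there is a bound beyond which every integer is forbidden, so $\l$ meets each class in finitely many elements, and there are only $\mu_1$ classes.

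For the converse I would read "may continue indefinitely" as a possibility statement rather than a strict equivalence, since the hypothesis that \emph{no single} missing part is coprime to $\mu_1$ does not by itself prevent a \emph{sum} of missing parts from landing in a coprime residue (for instance parts $\equiv 2$ and $\equiv 3\pmod 6$ sum to a unit). Accordingly I would establish that infinite unrefinable sequences genuinely occur in this regime by exhibiting one: fixing a prime $p\mid\mu_1$, the complement $\l=\mathbb{N}\setminus p\mathbb{N}$ of the multiples of $p$ has $\mathrm{mex}(\l)=p$, all missing parts divisible by $p$, and every sum of distinct missing parts again a multiple of $p$, hence not in $\l$; this is an infinite unrefinable sequence whose missing parts all share the factor $p$ with $\mu_1$. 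More generally, whenever the \emph{gcd of all} missing parts is $d>1$, every sum of missing parts is divisible by $d$, so any residue class $r$ with $d\nmid r$ never acquires a finite threshold, $\Vec{p}_\l$ fails to be saturated, and parts may be added in such a class indefinitely.

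The main obstacle is the inductive saturation step in the first direction: producing, in \emph{every} residue class, arbitrarily large integers that are sums of \emph{distinct} missing parts. The naive approach of combining representations as sums of the primitive parts $\mu_1,\mu_i$ fails because distinctness can be destroyed when representations overlap; the device that resolves this is to invoke the closure of $\mathcal{M}_\l$ directly, so that $y+a_{k_0}$ is read as a sum of the two distinct missing parts $y$ and $a_{k_0}$ rather than as a long repeated combination of $\mu_1$ and $\mu_i$. A secondary point requiring care is the precise reading of the infinite case, where the honest statement is the existence of infinite unrefinable sequences (and non-saturation under a common divisor of the missing parts), not a strict biconditional with the coprimality of an individual missing part.
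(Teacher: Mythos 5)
Your proof is correct, and for the forward direction it is essentially the paper's argument made rigorous: the paper simply asserts that the progression $\mu_1+k\mu_i$ ``eventually fills all $\mu_1$ residue classes'' and hence saturates $\Vec{p}_{\l}$, while you supply the missing justification, namely the closure of the complement of $\l$ under sums of distinct forbidden elements, which is exactly what makes each term of such a progression forbidden and what propagates the bound to every larger integer in the same class. Your class-by-class induction via $y+a_{k_0}$ is a minor variant of the paper's single chain $\mu_1+k\mu_i$, but both rest on the same mechanism. Where you genuinely diverge is the ``otherwise'' half. The paper argues that if $\gcd(\mu_1,\mu_i)=d\neq 1$ then ``only $\mu_1/d$ residue classes are affected,'' which considers each missing part in isolation and ignores the mixed-sum updates of its own construction of $\Vec{p}_{\l}$: as you observe, missing parts in classes $2$ and $3$ modulo $6$ produce a forbidden element in a class coprime to $6$, after which the vector can saturate even though no individual $\mu_i$ is coprime to $\mu_1$. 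So the paper's converse, read as a universal claim, is not supported by its proof. Your reading of ``may continue indefinitely'' as an existence statement, substantiated by the explicit infinite sequence $\mathbb{N}\setminus p\mathbb{N}$ and by the correct sufficient condition that the gcd of \emph{all} missing parts exceed $1$, is the honest repair; it proves a slightly weaker but actually true statement, and it is the version of the lemma a careful reader should retain.
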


\begin{proof}
    If $\gcd(\mu_1,\mu_i)=1$, the progression $\mu_1+k\mu_i$ eventually fills all $\mu_1$ residue classes modulo $\mu_1$, and the vector $\Vec{p}_{\l}$ becomes saturated. Hence, no further parts can be added, giving a finite sequence.\\
    If $\gcd(\mu_1,\mu_i)=d\neq1$, only $\mu_1/d$ residue classes are affected, so $\Vec{p}_{\l}$ is not saturated. Consequently, additional parts can be inserted indefinitely, leading to an infinite sequence.
\end{proof}

\subsection{Numerical Semigroups}

This section introduces numerical semigroups, which provide a fundamental tool for analysing unrefinable partitions.

\begin{definition}
    A \textbf{numerical set} $S$ is a subset of the non-negative integers $\mathbb{N}_0$ such that $0\in S$ and the complementary set $S^c=\mathbb{N}_0\setminus S$ is finite. A numerical set $S$ is a \textbf{numerical semigroup} if it is an additive submonoid of $\mathbb{N}_0$, i.e., the sum of any two elements of $S$ also belongs to $S$. The set of all numerical semigroups is denoted by $NS$.\\
    The set $S^c$ is called set of $\textbf{gaps}$ and its cardinality $G(S)$ is called \textbf{genus} of $S$.\\
    The maximal element $F(S)$ of $S^c$ is called \textbf{Frobenius number}. The \textbf{multiplicity} $M(S)$ of $S$ is the smallest non-zero element of $S$.
    \end{definition}

\begin{definition}
    Let $A$ be a nonempty subset of $\mathbb{N}_0$.  
    The submonoid generated by $A$ is
    \[
        \gennum{A}
        = \{ i_1 a_1 + \cdots + i_n a_n 
        \mid n \in \mathbb{N},\ \{a_1,\dots,a_n\} \subseteq A,\ 
        i_1,\dots,i_n \in \mathbb{N}_0 \}.
    \]
    If a numerical semigroup $S$ satisfies $S = \gennum{A}$, then $A$ is called a
    \textbf{system of generators} of $S$.
    
    A system of generators $A$ is \textbf{minimal} if
    $S \neq \gennum{B}$ for every proper subset $B \subset A$.
    Every numerical semigroup admits a unique minimal system of generators
    (see, for instance, \cite{RGS}).
    This unique set is denoted by $msg(S)$, and its cardinality
    $e(S) = \#msg(S)$ is called the \textbf{embedding dimension} of $S$.
\end{definition}

\begin{example}
Let $S=\{0,3,6,8,9,11,12,14,\rightarrow\}$, where $\rightarrow$ indicates that all integers greater than $14$ belong to $S$. Then $S$ is a numerical semigroup whose set of gaps is $S^c=\{1,2,4,5,7,10,13\}$, genus $G(S)=7$, Frobenius number $F(S)=13$ and multiplicity $M(S)=3$. Moreover, $msg(S)=\{3,8,16\}$ and thus $e(S)=3$.
\end{example}

\begin{lemma}
    Let $A$ be a nonempty subset of $\mathbb{N}_0$. Then $\gennum{A}$ is a numerical semigroup if and only if $gcd(A)=1$
\end{lemma}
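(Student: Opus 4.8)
The plan is to observe first that $\gennum{A}$ is, by its very construction, an additive submonoid of $\mathbb{N}_0$ containing $0$; it therefore automatically satisfies every requirement in the definition of a numerical semigroup \emph{except} possibly the finiteness of its complement. The whole statement thus reduces to the equivalence: $\mathbb{N}_0\setminus\gennum{A}$ is finite if and only if $\gcd(A)=1$. I would isolate this reduction at the start so that each direction only has to address cofiniteness.

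For the forward implication I would argue by contraposition. Suppose $d:=\gcd(A)>1$. Then every element of $A$ is a multiple of $d$, and any nonnegative-integer combination of multiples of $d$ is again a multiple of $d$, so $\gennum{A}\subseteq d\,\mathbb{N}_0$. The complement of $d\,\mathbb{N}_0$ in $\mathbb{N}_0$ contains every positive integer not divisible by $d$ and is hence infinite, so $\gennum{A}$ fails to have finite complement and is not a numerical semigroup.

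For the converse, assume $\gcd(A)=1$. I would first reduce to a finite subset: running the gcd over finitely many elements of $A$ produces a non-increasing sequence of positive integers, which must stabilize, so there exist $a_1,\dots,a_k\in A$ with $\gcd(a_1,\dots,a_k)=1$. By B\'ezout's identity there are integers $u_1,\dots,u_k$ with $\sum_i u_i a_i = 1$. Separating the positive from the negative coefficients yields two genuine nonnegative combinations $P=\sum_{u_i>0}u_i a_i$ and $Q=\sum_{u_i<0}(-u_i)a_i$, both lying in $\gennum{A}$, satisfying $P-Q=1$, that is, $P=Q+1$.

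The crucial step is then to convert the pair of consecutive elements $Q,Q+1\in\gennum{A}$ into cofiniteness. If $Q=0$ then $1\in\gennum{A}$ and $\gennum{A}=\mathbb{N}_0$. If $Q\ge 1$, I claim every integer $n\ge Q^2$ lies in $\gennum{A}$: writing $n=qQ+r$ with $0\le r<Q$ by Euclidean division, the bound $n\ge Q^2$ forces $q\ge Q>r$, whence $n=(q-r)Q+r(Q+1)$ is a nonnegative-integer combination of $Q$ and $Q+1$, so $n\in\gennum{A}$. Consequently $\mathbb{N}_0\setminus\gennum{A}\subseteq\{0,1,\dots,Q^2-1\}$ is finite. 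The main obstacle is precisely this last passage: B\'ezout only supplies a representation of $1$ with possibly negative coefficients, and the device of extracting two coprime consecutive elements $Q,Q+1$ and applying the division argument is what turns that signed identity into nonnegative representations of all sufficiently large integers, completing the proof.
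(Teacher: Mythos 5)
Your proof is correct. The paper does not prove this lemma at all --- it simply cites \cite[Lemma 2.1]{RGS} --- so you have supplied the standard self-contained argument: reduce to cofiniteness, handle $\gcd(A)>1$ by containment in $d\,\mathbb{N}_0$, and in the coprime case extract a finite coprime subset, apply B\'ezout to produce consecutive elements $Q,Q+1\in\gennum{A}$, and use Euclidean division to represent every $n\ge Q^2$ nonnegatively. All steps check out (in particular $n\ge Q^2$ does force $q\ge Q>r$ in the division $n=qQ+r$), and this is essentially the same proof found in the cited reference.
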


\begin{proof}
    See \cite[Lemma 2.1]{RGS}.
\end{proof}

\begin{definition}
    Let $S$ be a numerical semigroup and $n\in S\setminus\{0\}$. For $1\leq i\leq n-1$, let $w(i)$ be the smallest element of $S$ such that $w(i)\equiv i \pmod{n}$. The set $Ap(S,n)=\{0,w(1),\ldots,w(n-1)\}$ is called the \textbf{Ap\'ery set} of $S$ with respect to $n$.
\end{definition}

\begin{example}
    Let $S=\{0,3,6,8,9,11,12,14,\rightarrow\}$. Then:
    \begin{itemize}
        \item $Ap(S,3)=\{0,16,8\}$;
        \item $Ap(S,5)=\{0,6,12,8,9\}$.
    \end{itemize}
\end{example}

Keith and Nath showed in \cite{KN} that every numerical set uniquely defines a integer partition. Indeed, given a numerical set 
$$S=\{0,s_1,\ldots,s_n,\rightarrow\}$$
a Young diagram $Y_S$ can be constructed by drawing a contiguous polygonal path from the origin in $\mathbb{Z}^2$ as follows. For each integer $k$ from $0$ to $F(S)$,
\begin{itemize}
    \item draw an \textbf{east} step if $k\in S$,
    \item draw a \textbf{north} step if $k\notin S$
\end{itemize}

Note that the Young diagram $Y_S$ has $G(S)$ rows and $n$ columns, where $S=\{0,s_1,\ldots,s_n,\rightarrow\}$.\\

\begin{example}
    Let $S=\{0,3,6,8,9,11,12,14,\rightarrow\}$. The associated  Young diagram

$$
    \begin{tikzpicture}
        \draw (0,0)--(0.5,0)--(0.5,1)--(1,1)--(1,2)--(1.5,2)--(1.5,2.5)--(2.5,2.5)--(2.5,3)--(3.5,3)--(3.5,3.5)--(0,3.5)--(0,0);
        \draw (0.5,3.5)--(0.5,1);
        \draw (1,3.5)--(1,2);
        \draw (1.5,3.5)--(1.5,2.5);
        \draw (2,3.5)--(2,2.5);
        \draw (3,3.5)--(3,3);
        \draw (2.5,3.5)--(2.5,3);
        \draw (0,0.5)--(0.5,0.5);
        \draw (0,1)--(0.5,1);
        \draw (0,1.5)--(1,1.5);
        \draw (0,2)--(1,2);
        \draw (0,2.5)--(1.5,2.5);
        \draw (0,3)--(2.5,3);
        \node at (0.25,-0.25) {$0$};
        \node at (0.75,0.75) {$3$};
        \node at (1.25,1.75) {$6$};
        \node at (1.75,2.25) {$8$};
        \node at (2.25,2.25) {$9$};
        \node at (2.75,2.75) {$11$};
        \node at (3.25,2.75) {$12$};
        \end{tikzpicture}
$$
and the corresponding partition is $\l_S=(7,5,3,2,2,1,1)$.
\end{example}
This construction defines a bijection between numerical sets and integer partitions.

\begin{definition}
    Given a cell $c$ in a Young diagram, the \textbf{arm} of $c$ is the number of cells to its right in the same row, and the \textbf{leg} of $c$ is the number of cells below it in the same column. The \textbf{hook} of $c$ is defined as the sum of its arm and leg, plus one. The set of all hook lengths of the diagram is called the \textbf{hookset}.
\end{definition}

The following result, proved in \cite{TKG}, relates the hookset of a Young diagram to the numerical semigroup associated with it.

\begin{proposition}\label{hookset}
    Let $S=\{0,s_1,\ldots,s_n,\rightarrow\}$ be a numerical set with corresponding Young diagram $Y_S$. Then:
    \begin{enumerate}
        \item The hook length of the cell in the first column and $i$-th row is the $i$-th gap of $S$;
        \item For each $0\leq i\leq n-1$ the hook length of the top cell of the $i$-th column of $Y_S$ is $F(S)-s_i$;
        \item S is a numerical semigroup if and only all hook lengths appear in the first column.
    \end{enumerate}
\end{proposition}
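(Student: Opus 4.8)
The plan is to reduce all three parts to a single dictionary between the cells of $Y_S$ and the steps of the boundary path. The path is read off from the integers $0,1,\dots,F(S)$, with an east step at each $k\in S$ and a north step at each gap $k\in S^c$. First I would record the two basic facts about this encoding: the rows of $Y_S$ are in bijection with the north steps, hence with the gaps of $S$, and the columns are in bijection with the east steps, hence with the elements $s\in S$ satisfying $s<F(S)$ (there are exactly $n$ of them, namely $s_0=0,s_1,\dots,s_{n-1}$, since $s_n$ is the conductor and its east step lies beyond position $F(S)$). Moreover, the row attached to a gap $g$ contains exactly one cell for each east step occurring before position $g$, that is, one cell $c_{g,s}$ for each $s\in S$ with $s<g$; this is immediate from the fact that after traversing positions $0,\dots,g-1$ the path has moved east $\#\{s\in S : s<g\}$ times, which is the horizontal extent of that row.

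The central step is the hook formula: the hook length of the cell $c_{g,s}$ equals $g-s$. To prove it I would count the arm and the leg directly from the path. The cells lying to the right of $c_{g,s}$ in its row are the $c_{g,s'}$ with $s<s'<g$ and $s'\in S$, so the arm equals the number of east steps at positions in the open interval $(s,g)$; dually, the cells lying below $c_{g,s}$ in its column are the $c_{g',s}$ with $s<g'<g$ and $g'\in S^c$, so the leg equals the number of north steps in $(s,g)$. Since every integer position strictly between $s$ and $g$ carries exactly one step, the arm plus the leg equals $g-s-1$, and hence the hook length is $(g-s-1)+1=g-s$. This bookkeeping --- matching arms and legs with step-counts over an interval, and verifying that each cell of $Y_S$ is produced exactly once by a pair $(g,s)$ with $g\in S^c$, $s\in S$, $s<g$ --- is the part that must be done carefully and is the main obstacle; everything afterwards is formal.

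Parts (1) and (2) then fall out of the formula. For (1), the first column sits above the first east step, which occurs at position $0\in S$; hence its cell in the row of the $i$-th gap $g_i$ is $c_{g_i,0}$, with hook $g_i-0=g_i$, the $i$-th gap. For (2), the longest row is the one attached to the largest gap $F(S)$, and since every $s_i<F(S)$ this row meets every column, so the top cell of the $i$-th column is $c_{F(S),s_i}$, whose hook length is $F(S)-s_i$.

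Finally, for (3), part (1) shows that the set of first-column hook lengths is exactly the set of gaps of $S$, so the condition ``all hook lengths appear in the first column'' is equivalent to the statement that every hook length $g-s$ (with $g\in S^c$, $s\in S$, $s<g$) is itself a gap. I would deduce the equivalence with the semigroup property by a short closure argument. If $S$ is a numerical semigroup and some hook $g-s$ belonged to $S$, then $g=s+(g-s)$ would be a sum of two elements of $S$, forcing $g\in S$ and contradicting that $g$ is a gap; so every hook length is a gap. Conversely, if $S$ is not closed under addition, choose $a,b\in S\setminus\{0\}$ with $a+b\notin S$: then $g=a+b$ is a gap, $s=a\in S$ satisfies $s<g$, and the hook $g-s=b$ lies in $S$, so this hook length does not appear in the first column. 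This establishes the equivalence and completes the proof.
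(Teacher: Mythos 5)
Your proof is correct, and it is organized differently from the paper's. You first establish the uniform dictionary (rows $\leftrightarrow$ gaps, columns $\leftrightarrow$ elements $s\in S$ with $s<F(S)$) and then prove the single hook formula $h(c_{g,s})=g-s$ by counting east and north steps in the open interval $(s,g)$; parts (1), (2) and (3) all become immediate corollaries. The paper instead proves (1) and (2) by two separate inductions that walk along the boundary path (tracking how the row lengths and column heights change as one passes each gap or each element of $S$), and only inside the proof of (3) does it invoke the general formula that the cell in the row of $s_i^c$ and the column of $s_j$ has hook length $s_i^c-s_j$, justifying it somewhat loosely ``by item (2)'' even though item (2) only covers the top cells. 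Your arrangement buys a cleaner and more self-contained argument: the one formula you prove up front is exactly the fact the paper needs but does not fully isolate, and your closure argument for (3) (if $g-s\in S$ then $g=s+(g-s)\in S$, a contradiction; conversely a failure of closure $a+b\notin S$ produces the cell $c_{a+b,a}$ with hook $b\in S$) coincides in substance with the paper's. The only price is that you must verify the cell-step bijection carefully, which you do. No gaps.
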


\begin{proof}
    Let $1\le j\le n-1$ be the smallest integer such that $s_j\neq j$, so that $s_1^c=j$, where $S^c=\{s_1^c,s_2^c,\ldots,F(S)\}$. Hence, the first row of $Y_S$ contains $j$ east steps, and the hook length of its first cell is $s_1^c=j$. Proceeding by induction, assume that the length of each hook in the first column, from the first row to the $i$-th row, coincides with the gaps of $S$, hence the hook length of the cell in the first column and $i$-th row is $s_i^c$. Let $k$ be such that $s_{k-1}<s_i^c<s_k$. If $s_k\neq s_i^c+1$, then $s_{i+1}^c=s_i^c+1$ and the $(i+1)$-th row of $Y_S$ has the same number of columns of the $i$-th row, so the hook length is $s_i^c+1$. Otherwise, if $s_k=s_i^c+1$, be $j>k$ the lowest integer such that $s_j\neq s_i^c+j+1-k$. Then $j-k$ integers lie between $s_i^c$ and $s_{i+1}^c=s_i^c+j+1-k$, or in other words the $(i+1)$-th row of $Y_S$ has $j-k$ columns more than the $i$-th row. This proves item~$(1)$.\\    
    Now, the upper left corner hook coincides with $F(S)=F(S)-s_0$. Assume that the hook in the $i$-th column is $F(S)-s_i$. Let $k=s_{i+1}-s_i$. After the $i$-th column, the path in $Y_S$ contains $k-1$ north steps, then the hook of the $(i+1)$-th column is equal to $F(S)-s_i-(k-1)-1=F(S)-k=F(S)-s_{i+1}$, establishing item~$(2)$.\\
    Item~$(3)$ remains to check. Let $S=\{s_0,\ldots,s_n,\rightarrow\}$ be a numerical semigroup. Notice that every $\widehat{S}$ such that $\widehat{S}^c=\{s_1^c,\ldots,s_i^c\}$, where $s_i^c<F(S)$, is a numerical semigroup. Furthermore, by item $(2)$, the hook of the $i$-th row and $j$-th column in $Y_S$ is equal to $s_i^c-s_j$. Suppose that exists a cell with hook length $s_i^c-s_j$ that doesn't appear in the first column, so this number is not in $S^c$.  Then $s_h=s_i^c-s_j\in S$, but $s_h+s_j=s_i^c\in S$, a contradiction. Conversely, if all the cells in the $i$-th row are signed by numbers appearing in the first column, then, by item~$(1)$, they are elements of $S^c$. Thus, no two elements of $S$ sum to $s_i^c$, and it follows that $S$ is a numerical semigroup.
\end{proof}

\begin{example}
Consider the numerical semigroup $S=\{0,3,6,8,9,11,12,14,\rightarrow~\}$. The hook lengths in the Young diagram are:

$$
    \begin{tikzpicture}
        \draw (0,0)--(0.5,0)--(0.5,1)--(1,1)--(1,2)--(1.5,2)--(1.5,2.5)--(2.5,2.5)--(2.5,3)--(3.5,3)--(3.5,3.5)--(0,3.5)--(0,0);
        \draw (0.5,3.5)--(0.5,1);
        \draw (1,3.5)--(1,2);
        \draw (1.5,3.5)--(1.5,2.5);
        \draw (2,3.5)--(2,2.5);
        \draw (3,3.5)--(3,3);
        \draw (2.5,3.5)--(2.5,3);
        \draw (0,0.5)--(0.5,0.5);
        \draw (0,1)--(0.5,1);
        \draw (0,1.5)--(1,1.5);
        \draw (0,2)--(1,2);
        \draw (0,2.5)--(1.5,2.5);
        \draw (0,3)--(2.5,3);
        \node at (0.25,0.25) {$1$};
        \node at (0.25,0.75) {$2$};
        \node at (0.25,1.25) {$4$};
        \node at (0.25,1.75) {$5$};
        \node at (0.25,2.25) {$7$};
        \node at (0.25,2.75) {$10$};
        \node at (0.25,3.25) {$13$};

        \node at (0.75,1.25) {$1$};
        \node at (0.75,1.75) {$2$};
        \node at (0.75,2.25) {$4$};
        \node at (0.75,2.75) {$7$};
        \node at (0.75,3.25) {$10$};

        \node at (1.25,2.25) {$1$};
        \node at (1.25,2.75) {$4$};
        \node at (1.25,3.25) {$7$};
        
        \node at (1.75,2.75) {$2$};
        \node at (1.75,3.25) {$5$};
        \node at (2.25,2.75) {$1$};
        \node at (2.25,3.25) {$4$};
        \node at (2.75,3.25) {$2$};
        \node at (3.25,3.25) {$1$};
        \end{tikzpicture}
$$
\end{example}

\begin{definition}\label{SNS}
    A numerical semigroup $S$ is called \textbf{symmetric} if $F(S)$ is odd and if $x\in S^c$ implies $F(S)-x\in S$. The family of symmetric numerical semigroups is denoted by $SNS$.\\
    A numerical semigroup is \textbf{pseudo-symmetric} if $F(S)$ is even and $x\in S^c$ implies $F(S)-x\in S$ or $x=\frac{F(S)}{2}$.
\end{definition}

\begin{lemma}
    Let $S$ be a numerical semigroup. Then:
    \begin{itemize}
        \item $S$ is symmetric if and only if $G(S)=\frac{F(S)+1}{2}$;
        \item $S$ is pseudo-symmetric if and only if $G(S)=\frac{F(S)+2}{2}$.
    \end{itemize} 
\end{lemma}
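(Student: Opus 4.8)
The plan is to exploit the involution $\phi\colon x\mapsto F(S)-x$ on the finite set $\{0,1,\ldots,F(S)\}$, abbreviating $F=F(S)$ and $g=G(S)$. The crucial elementary observation is that if $x\in S$ with $0\le x\le F$, then $F-x$ must be a gap: otherwise $F-x\in S$ would force $F=x+(F-x)\in S$, contradicting that $F$ is the Frobenius number. Hence $\phi$ maps $S\cap\{0,\ldots,F\}$ into $S^c$, and since every gap lies in $\{0,\ldots,F\}$ we obtain the partition count
\[
\#\bigl(S\cap\{0,\ldots,F\}\bigr)+g=F+1.
\]
Because $\phi$ is an involution of $\{0,\ldots,F\}$, the inclusion $\phi\bigl(S\cap\{0,\ldots,F\}\bigr)\subseteq S^c$ always holds; the content of symmetry is precisely whether this inclusion is an equality.

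First I would treat the symmetric case. The symmetric condition ``$x\in S^c\Rightarrow F-x\in S$'' says exactly that $\phi(S^c)\subseteq S$, equivalently (applying the involution) that $S^c\subseteq\phi\bigl(S\cap\{0,\ldots,F\}\bigr)$. Combined with the always-valid reverse inclusion, this is equivalent to $\phi$ restricting to a bijection between $S\cap\{0,\ldots,F\}$ and $S^c$, hence to the equality $\#\bigl(S\cap\{0,\ldots,F\}\bigr)=g$. Substituting into the count above yields $2g=F+1$, i.e.\ $g=\frac{F+1}{2}$, which also forces $F$ odd as required by \Cref{SNS}. Conversely, $g=\frac{F+1}{2}$ gives equal finite cardinalities for $S\cap\{0,\ldots,F\}$ and $S^c$, so the inclusion $\phi\bigl(S\cap\{0,\ldots,F\}\bigr)\subseteq S^c$ is forced to be a bijection, and applying $\phi$ recovers the symmetric condition.

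For the pseudo-symmetric case ($F$ even) I would first isolate the fixed point of $\phi$. Since $F$ is even, $\phi(F/2)=F/2$, and $F/2\in S^c$, because $F/2\in S$ would give $F=\frac{F}{2}+\frac{F}{2}\in S$; thus $F/2$ is the unique fixed point and it is a gap. The pseudo-symmetric condition then says exactly that $\phi$ maps $S^c\setminus\{F/2\}$ into $S$, and repeating the argument above on the involution restricted to $\{0,\ldots,F\}\setminus\{F/2\}$ shows this is equivalent to $\phi$ restricting to a bijection between $S\cap\{0,\ldots,F\}$ and $S^c\setminus\{F/2\}$. That bijection gives $\#\bigl(S\cap\{0,\ldots,F\}\bigr)=g-1$, and combining with the count produces $2g=F+2$, i.e.\ $g=\frac{F+2}{2}$; the converse again follows by a cardinality-forces-bijection argument.

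The reasoning is essentially a single counting lemma applied twice, so there is no deep obstacle. The only point demanding care is the bookkeeping around the fixed point $F/2$ in the pseudo-symmetric case: I must verify both that $F/2$ is always a gap and that it never lies in the image $\phi\bigl(S\cap\{0,\ldots,F\}\bigr)$ (its sole $\phi$-preimage is itself, which is not in $S$), so that the off-by-one shift in cardinalities is correctly attributed. I would also make explicit that the parity conditions ($F$ odd, resp.\ $F$ even) built into \Cref{SNS} are not extra hypotheses but are automatically enforced by the integrality of $g=\frac{F+1}{2}$ and $g=\frac{F+2}{2}$.
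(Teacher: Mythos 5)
Your argument is correct and complete. Note, however, that the paper does not actually prove this lemma: it simply cites \cite{RGS}, Corollary 4.5, so there is no in-paper proof to compare against. What you have written is essentially the standard textbook argument (and, in substance, the one found in the cited reference): the involution $\phi\colon x\mapsto F-x$ on $\{0,\ldots,F\}$ always injects $S\cap\{0,\ldots,F\}$ into $S^c$ because $F\notin S$, the two sets partition $\{0,\ldots,F\}$ giving $\#\bigl(S\cap\{0,\ldots,F\}\bigr)+G(S)=F+1$, and (pseudo-)symmetry is exactly the statement that this injection is onto $S^c$ (respectively onto $S^c\setminus\{F/2\}$). Your bookkeeping around the fixed point is right: $F/2$ is always a gap when $F$ is even, and it cannot lie in $\phi\bigl(S\cap\{0,\ldots,F\}\bigr)$ since its only preimage is itself, which accounts for the off-by-one in the second case. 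You also correctly observe that the parity requirements in \Cref{SNS} are forced by integrality of $G(S)$, so both equivalences are genuine. The only thing your write-up buys over the paper is self-containedness; there is no mathematical discrepancy.
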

 \begin{proof}
     See \cite{RGS} Corollary $4.5$.
     \end{proof}

\section{Connection between Unrefinable Partitions and Numerical Semigroups}\label{finale}

Let $S=\{0,s_1,\ldots,s_n,\rightarrow\}$ be a numerical semigroup and $S^c=\{s_1^c,\ldots,s_t^c\}$ be the set of its gaps. By the definition of a numerical semigroup, the sum of any two elements $s_i,s_j\in S$ cannot belong to $S^c$. In other words, each element of $S^c$ cannot be written as a sum of two or more elements from $S$. Therefore, the set of gaps $S^c$ of a numerical semigroup corresponds to an unrefinable partition $\l$. More precisely, the following correspondences hold:
\[
\begin{aligned}
    & S^c=\{s_1^c,\ldots,s_t^c\} &\rightarrow \quad &\l=(\l_1,\ldots,\l_t)\\
    & S=\{0,s_1,\ldots,s_t+1,\rightarrow\} &\rightarrow \quad &\{0\}\cup\mathcal{M}_{\l}\cup\{\l_t+1,\rightarrow\}\\
    & G(S) &\rightarrow \quad &len(\l)\\
    & F(S) &\rightarrow \quad &\l_t\\
    &M(S) &\rightarrow \quad &\mathrm{mex}(\l)=\mu_1
\end{aligned}
\]
An additional correspondence can also be identified: the Ap\'ery set with respect to $s_1$, $Ap(S,s_1)$, coincides with the vector of forbidden elements of the unrefinable partition $\Vec{p}_{\l}$, except for the first position. In the Ap\'ery set, the first element is $0$, while in the vector $\Vec{p}_{\l}$ the first component is $ks_1$ for some $k>1$. 

\begin{example}
    Let $S=\{0,4,7,8,10,11,12,14,\rightarrow\}$ be a numerical semigroup. Its complement is $S^c=\{1,2,3,5,6,9,13\}$, which determines the unrefinable partition $\l=(1,2,3,5,6,9,13)$. Observe that $\mathcal{M}_{\l}=\{4,7,8,10,11,12\}$ coincides with $S^c\setminus(\{0\}\cup\{14,\rightarrow\})$. Furthermore, the number of gaps $G(S)=7=len(\l)$, the Frobenius number $F(S)=13=\l_7$, and $M(S)=4=\mathrm{mex}(\l)$. The Ap\'ery set of $S$ with respect to $4$ is $\{0,17,10,7\}$, while the vector $\Vec{p}_{\l}=\{8,17,10,7\}$.  
\end{example}

As a direct consequence of these correspondences, the following result is obtained.

\begin{proposition}\label{nscontu}
    Let $NS$ be the set of numerical semigroups and let $\mathcal{U}$ denote the set of unrefinable partitions into distinct parts. Then,
    \[
    NS\subset\mathcal{U}.
    \]
\end{proposition}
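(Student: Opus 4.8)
The plan is to verify unrefinability directly from the defining closure property of a numerical semigroup, and then to observe that the correspondence described above embeds $NS$ into $\mathcal{U}$.

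First I would fix a numerical semigroup $S=\{0,s_1,\ldots,s_n,\rightarrow\}$ and form its associated partition $\lambda=(\lambda_1,\ldots,\lambda_t)$ by listing the gaps $S^c=\{s_1^c<\cdots<s_t^c\}$ in increasing order. Since gaps are distinct, $\lambda$ is a genuine partition into distinct parts of $F(S)=\lambda_t$. The key identification, already recorded in the correspondence table preceding the statement, is that the missing parts of $\lambda$ are exactly the nonzero elements of $S$ not exceeding $F(S)$; that is, $\mathcal{M}_\lambda = S\cap\{1,\ldots,\lambda_t\}$. I would make this explicit: by definition $\mathcal{M}_\lambda=\{1,\ldots,\lambda_t\}\setminus S^c$, and deleting the gaps from $\{1,\ldots,F(S)\}$ leaves precisely the semigroup elements in that range.

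Next comes the core step. Suppose, for contradiction, that $\lambda$ is refinable. Then by the definition of refinability some part $\lambda_\ell$ admits a decomposition $\lambda_\ell=\mu_{i_1}+\cdots+\mu_{i_k}$ with $k\geq 2$ and the $\mu_{i_j}$ distinct missing parts. By the identification above, each $\mu_{i_j}\in S$. Since $S$ is closed under addition, an induction on $k$ extends closure from pairs to arbitrary finite sums, so $\mu_{i_1}+\cdots+\mu_{i_k}\in S$, whence $\lambda_\ell\in S$. But $\lambda_\ell$ is a gap, i.e. $\lambda_\ell\in S^c=\mathbb{N}_0\setminus S$, a contradiction. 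Hence no part admits such a decomposition and $\lambda\in\mathcal{U}$.

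Finally I would note that the map $S\mapsto\lambda$ is injective, since a numerical semigroup is determined by its set of gaps, so the correspondence realises $NS$ as a subset of $\mathcal{U}$. I do not expect a genuine obstacle: the argument is essentially the observation that a refinement by \emph{distinct} missing parts is a special case of writing a gap as a sum of semigroup elements, which closure forbids. The only points requiring care are the bookkeeping in the identification $\mathcal{M}_\lambda=S\cap\{1,\ldots,\lambda_t\}$ and, should one wish to justify the proper inclusion signalled by $\subset$, exhibiting an unrefinable partition whose associated numerical set fails to be a semigroup. A natural witness is $\lambda=(1,2,3,5,6,8,9,13)$: its associated set has $4\in S$ but $8=4+4\notin S$, so $S$ is not closed, yet $\lambda$ is unrefinable precisely because refinement demands \emph{distinct} parts and $8$ has no representation as a sum of distinct missing parts. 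I should also dispose separately of the degenerate semigroups (for instance $\langle 2,3\rangle$, with a single gap), since the partition definition requires $t\geq 2$; such cases are vacuously unrefinable.
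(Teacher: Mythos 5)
Your proposal is correct and follows essentially the same route as the paper: the paper derives the proposition directly from the observation that closure of $S$ under addition forbids any gap from being a sum of two or more semigroup elements, which is exactly your contradiction argument, with the missing parts of $\lambda$ identified with the nonzero elements of $S$ below $F(S)$. Your additional remarks on injectivity, the strictness of the inclusion (the paper's own witness is Example~\ref{ex19}), and the degenerate semigroups with fewer than two gaps are sound refinements of the same argument rather than a different approach.
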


The converse inclusion does not hold. If $\l\in\mathcal{U}$, then, by definition, no part $\l_i\in\l$ can be expressed as the sum of two equal missing parts. For example, if $\mu_1\in\mathcal{M}_{\l}$, then $2\mu_1$ might be in $\l$, that contradicts the definition of a numerical semigroup, according to which every positive multiple of an element in the semigroup must also belong to it. Thus, any unrefinable partition $\l$, and more generally, any partition into distinct parts, defines a numerical set $S_{\l}$ such that $S_{\l}^c=\l$.\\

The following example shows that $S_{\l}$ is not necessarily a numerical semigroup.  

\begin{example}\label{ex19}
    The partition $\l=(1,2,5,6,8)$ is unrefinable, while the corresponding set $S_{\l}=\{0,3,4,7,9,\rightarrow\}$ is not a numerical semigroup, since $6=3+3$ and $8=4+4$ are not in $S_{\l}$.
\end{example}

By the characterisation of hooksets of numerical semigroups (Proposition \ref{hookset}), an analogous statement can be established for unrefinable partitions.

\begin{lemma}\label{condsemi}
    Let $\l=(\l_1,\ldots,\l_t,\rightarrow)$ be an unrefinable partition corresponding to the Young tableau $Y_{S_{\l}}$, where $S_{\l}$ is the numerical set associated with $\l$. Then
    \begin{enumerate}
        \item The hook length of the cell in the first column and $i$-th row is $\l_i$;
        \item For each $2\leq i\leq \#\mathcal{M}_{\l}$ the hook length of the top cell of the $i$-th column of $Y_{S_{\l}}$ is equal to $\l_t-\mu_{i-1}$;
        \item $\l$ is an unrefinable partition if and only every length of the hook of the cells of       
        $Y_{S_{\l}}$
        \begin{enumerate}
        \item is contained in the first column $Y_{S_{\l}}$;\\
        or
        \item the length of the hook of the cell in the first column and the same row is its double.    
        \end{enumerate}
    \end{enumerate}
\end{lemma}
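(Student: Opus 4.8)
The plan is to obtain items~(1) and~(2) as direct transcriptions of \Cref{hookset} through the dictionary $S_\l^c=\{\l_1<\dots<\l_t\}$ and $S_\l=\{0=s_0,\,s_1=\mu_1,\,s_2=\mu_2,\dots\}$, with $F(S_\l)=\l_t$. Indeed, the $i$-th gap of $S_\l$ is exactly the part $\l_i$, so item~(1) of \Cref{hookset} yields item~(1) here. For item~(2), reading columns with the first column as $i=1$, the $i$-th column corresponds to the element $s_{i-1}$ of $S_\l$; since $s_{i-1}=\mu_{i-1}$ for $2\le i\le\#\mathcal{M}_\l$, item~(2) of \Cref{hookset} gives top-cell hook $F(S_\l)-s_{i-1}=\l_t-\mu_{i-1}$, as claimed.

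For item~(3) I would first recall, from the proof of \Cref{hookset}, that the hook of the cell lying in the row of the gap $g$ and the column of the element $s\in S_\l$ (which exists precisely when $s<g$) equals $g-s$. Hence the hookset of $Y_{S_\l}$ is
\[
\{\,g-s \;:\; g\in\l,\ s\in S_\l,\ 0\le s<g\,\}.
\]
Fix such a hook $h=g-s$. If $s=0$ then $h=g$ is a gap and sits in the first column, so (a) holds. If $s=\mu_i>0$, then either $h$ is again a gap, giving (a), or $h\in S_\l$; since $1\le h<g\le\l_t$, in the latter case $h=\mu_j$ for some missing part, and therefore $g=\mu_i+\mu_j$. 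The key is now to read condition~(b) correctly: the first-column cell in the row of $g$ has hook $g$, so the requirement that this be double the hook $h$ means $g=2h=2(g-\mu_i)$, i.e. $g=2\mu_i$ (and then $h=\mu_i$). Thus condition~(b) holds for a non-first-column hook $h=g-\mu_i\in S_\l$ exactly when $i=j$.

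With this dictionary the equivalence follows. For the forward direction, assume $\l$ is unrefinable and let $h=g-\mu_i$ be a hook that is not a gap; then $g=\mu_i+\mu_j$, and $i\ne j$ would exhibit the part $g$ as a sum of two distinct missing parts, contradicting unrefinability, so $i=j$ and (b) holds. For the converse I would argue by contrapositive: if $\l$ is refinable, \Cref{proposition:simplerefinement} writes its smallest refinable part as $\l_r=a+b$ with $a,b\in\mathcal{M}_\l$ distinct; then the hook $\l_r-a=b=\mu_j\in S_\l$ is not a gap, while $\l_r\ne 2a$, so this hook violates both (a) and (b).

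The main obstacle is conceptual rather than computational: unrefinability concerns sums of arbitrarily many distinct missing parts, whereas the hookset records only the pairwise differences $g-s$. What makes the argument close is \Cref{proposition:simplerefinement}, which collapses any refinement to a two-summand relation $\l_r=a+b$ and thereby lets the hook data detect it. A secondary point requiring care is the precise parsing of the doubling condition~(b) together with the row and column indexing, so that the admissible non-semigroup hooks are exactly the doublings $g=2\mu_i$ (equivalently $i=j$) and nothing else.
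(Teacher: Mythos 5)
Your proof is correct and follows the same overall route as the paper: items (1) and (2) are read off from \Cref{hookset}, and item (3) rests on the identification of the hook in the row of a gap $g$ and the column of $s\in S_\l$ with the difference $g-s$, so that a non-first-column hook corresponds exactly to a decomposition of a part into two missing parts, with condition (b) singling out the case where the two summands coincide. Where you genuinely improve on the paper is the converse direction of (3): the paper merely observes that a part of the form $2\mu_j$ cannot be refined as $\mu_j+\mu_j$ and then asserts unrefinability, leaving implicit the reduction from refinements with three or more summands to two-summand ones, whereas you close this gap explicitly by arguing the contrapositive via \Cref{proposition:simplerefinement} (noting correctly that the two summands $a,b$ it produces are distinct, since $b=\mu_{i_2}+\cdots+\mu_{i_t}>\mu_{i_1}=a$). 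This is exactly the step needed for the hookset, which only sees pairwise differences, to detect arbitrary refinements, and your write-up makes the lemma's proof complete where the paper's is only sketched.
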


\begin{proof}
    The proofs of item $(1)$ and item $(2)$ are the same of the proofs of item $(1)$ and item $(2)$ in Proposition \ref{hookset}, and it is true in general for all partitions into distinct parts.\\
    Let $\l$ be an unrefinable partitions. If $\l$ corresponds to a numerical semigroup $S_{\l}$ then, by Proposition \ref{hookset} item $(3)$, all the cells are marked with numbers appearing in the first column, so the statement $(a)$ is proved. If $S_{\l}\notin NS$, then there exists $\mu_j\in\mathcal{M}_{\l}$ such that $k\mu_j\in\l$. In particular $k=2$, otherwise the partition is refinable. By the fact that $2\mu_j\in\l$ there exists one row whose its first cell is marked by $2\mu_j$ (item $(1)$) and all the other cells in the row are such that $2\mu_j-s_i$, with $s_i\in S_{\l}$ and $s_i<\mu_j$. So there is a cell marked by $2\mu_j-\mu_j=\mu_j$. If neither the condition $(a)$ nor the condition $(b)$ are satisfied, then there exists a cell marked by $x$ such that it does not appear in the first column, so, by item $(1)$, $x\notin\l$. Moreover, let $z$ be the first element in the same row of $x$. Then $x=z-s_i$ for some $s_i\in S_{\l}$. Since $z\neq2x$ and $z=x+s_i$, a contradiction arises because $\l$ is refinable.\\
    Conversely, if only condition $(a)$ is satisfied, then $S_{\l}$ is a numerical semigroup and $\l$ is unrefinable. If conditions $(a)$ or $(b)$ are verified, then there exists a cell not in the first column signed by $\mu_j$  and the first element of its row is $2\mu_j$, therefore $2\mu_j\in\l$ by item $(1)$, which cannot be replaced by the sum $\mu_j+\mu_j$, so the partition is unrefinable.
\end{proof} 

\begin{example}
Consider the unrefinable partition $\l=(1,2,5,6,8)$, as in Example \ref{ex19}. Writing the hook length of each cell yields

$$
    \begin{tikzpicture}
        \draw (0,0)--(0.5,0)--(0.5,1)--(1.5,1)--(1.5,2)--(2,2)--(2,2.5)--(0,2.5)--(0,0);
        \draw (0.5,2.5)--(0.5,1);
        \draw (1,2.5)--(1,1);
        \draw (1.5,2.5)--(1.5,2);
        \draw (0,0.5)--(0.5,0.5);
        \draw (0,1)--(0.5,1);
        \draw (0,1.5)--(1.5,1.5);
        \draw (0,2)--(1.5,2);
        \node at (0.25,0.25) {$1$};
        \node at (0.25,0.75) {$2$};
        \node at (0.25,1.25) {$5$};
        \node at (0.25,1.75) {$6$};
        \node at (0.25,2.25) {$8$};

        \node at (0.75,1.25) {$2$};
        \node at (0.75,1.75) {$3$};
        \node at (0.75,2.25) {$5$};
        
        \node at (1.25,1.25) {$1$};
        \node at (1.25,1.75) {$2$};
        \node at (1.25,2.25) {$4$};
        
        \node at (1.75,2.25) {$1$};
        \end{tikzpicture}
$$
Observe that the condition $(a)$ and $(b)$ of Lemma \ref{condsemi} are satisfied, and hence $\l$ is unrefinable.
\end{example}

A subset of unrefinable partitions, that will be useful for establishing relations with numerical semigroups, is now introduced.

\begin{definition}
    Let $\l=(\l_1,\ldots,\l_t)$ be an unrefinable partition, and denote by $\mathcal{U}(\l_t)$ the set of the unrefinable partitions whose maximal part is equal to $\l_t$. Consider the subset of $\mathcal{U}(\l_t)$ consisting of all partitions with the maximal number of missing parts
    \[
    \mathcal{\Bar{U}}(\l_t)=\left\{\l\in\mathcal{U}(\l_t)\mid \#\mathcal{M}_{\l}=\left\lfloor\frac{\l_t}{2}\right\rfloor\right\}
    \]
\end{definition}

Recall that $\l_t = F(S)$. Since the following lemma concerns numerical semigroups, it is more convenient to state it with $\l_t$ as a generic integer.

\begin{lemma} 
    Let $NS(k)$ be the set of numerical semigroups $S$ such that $F(S)=k$. Then 
    \[
    \#NS(k)<\#\mathcal{U}(k).
    \]
\end{lemma}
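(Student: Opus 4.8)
The plan is to prove the inequality in two stages: first establish the weak inequality $\#NS(k)\le \#\mathcal{U}(k)$ by exhibiting an injection of $NS(k)$ into $\mathcal{U}(k)$, and then upgrade it to a strict inequality by producing a single unrefinable partition of maximal part $k$ that lies outside the image of this injection. For the first stage I would use the gap correspondence established just before Proposition~\ref{nscontu}: to each $S\in NS(k)$ associate the partition $\l=S^c$ obtained by listing the gaps in increasing order. By Proposition~\ref{nscontu} this partition is unrefinable, and its maximal part is $\l_t=F(S)=k$, so $\l\in\mathcal{U}(k)$. Since a numerical semigroup is completely determined by its set of gaps, the assignment $S\mapsto S^c$ is injective, which already gives $\#NS(k)\le\#\mathcal{U}(k)$.

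The next step is to identify the image of this injection inside $\mathcal{U}(k)$ and show it is proper. Given $\l\in\mathcal{U}(k)$, the associated numerical set $S_\l$ (with $S_\l^c=\l$) has Frobenius number $\max\l=k$; hence $\l$ lies in the image exactly when $S_\l\in NS$, in which case $S_\l\in NS(k)$ is its preimage. Thus it suffices to produce one $\l\in\mathcal{U}(k)$ with $S_\l\notin NS$. I would take the single-deletion partition $\pi_{k,d}$ of Definition~\ref{Triang} with $1\le d\le\lfloor k/2\rfloor$, for instance $\pi_{k,1}=(2,3,\ldots,k)$. Its only missing part is $d$, so $\#\mathcal{M}_{\pi_{k,d}}=1$ and $\pi_{k,d}$ is unrefinable by Lemma~\ref{M=0,1}, with maximal part $k$. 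Its associated numerical set is $S_{\pi_{k,d}}=\{0,d,k+1,\rightarrow\}$, and since $2d\le k$ and $2d\neq d$ we have $d\in S_{\pi_{k,d}}$ while $d+d=2d\in\pi_{k,d}=S_{\pi_{k,d}}^c$. Hence $S_{\pi_{k,d}}$ is not closed under addition and is not a numerical semigroup, so $\pi_{k,d}$ witnesses that the inclusion of the image is strict. This is precisely the phenomenon illustrated in Example~\ref{ex19}.

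The main point to be careful about is the range of $k$ for which the statement is genuinely strict, rather than any deep difficulty: the argument requires a witness to exist, and $\pi_{k,1}$ is a legitimate partition into at least two distinct parts only for $k\ge 3$. For $k\le 2$ the set $\mathcal{U}(k)$ is too small and the inequality degenerates (for $k=2$ one has $\#NS(2)=\#\mathcal{U}(2)=1$), so the statement should be read for $k\ge 3$, where $\pi_{k,1}$ always applies. Beyond this, no further estimate is needed, since a single partition outside the image already yields strictness; there is no need to count $\#\mathcal{U}(k)$ or $\#NS(k)$ exactly. If one wished to quantify the gap, the same idea produces many witnesses, as every $\pi_{k,d}$ with $1\le d\le\lfloor k/2\rfloor$ fails to be a semigroup, giving an explicit lower bound on $\#\mathcal{U}(k)-\#NS(k)$.
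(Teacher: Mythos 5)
Your proof is correct and takes essentially the same route as the paper, whose entire proof is a one-line appeal to the inclusion $NS\subset\mathcal{U}$ of Proposition~\ref{nscontu}, built on the same gap correspondence $S\mapsto S^c$. Your version is in fact more complete than the paper's: you make the injection and its image explicit, supply a concrete witness such as $\pi_{k,1}$ for strictness (the paper leaves this to the surrounding discussion and Example~\ref{ex19}), and correctly flag the caveat that the strict inequality only holds for $k\ge 3$, which the paper omits.
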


\begin{proof}
    This is a direct consequence of Proposition \ref{nscontu}.
\end{proof}

Some structural properties of partitions in $\Use$ are now considered.

\begin{lemma}\label{buchispecchio}
    If $\l\in\Use$ and $x\neq\frac{\l_t}{2}$, then $x\in\l$ if and only if $\l_t-x\in\mathcal{M}_{\l}$.
\end{lemma}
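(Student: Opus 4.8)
The plan is to exploit the involution $\sigma\colon x\mapsto \l_t-x$ on the set $A=\{1,\dots,\l_t-1\}$ and to read off the membership pattern forced by the extremal hypothesis $\#\mathcal{M}_\l=\lfloor \l_t/2\rfloor$. First I would record the basic decomposition: since $\{1,\dots,\l_t\}=\l\sqcup\mathcal{M}_\l$ and $\l_t\in\l$, the set $A$ is the disjoint union of the non-maximal parts $P=\{\l_1,\dots,\l_{t-1}\}$ and the missing parts $\mathcal{M}_\l$. The map $\sigma$ is an involution of $A$ whose only fixed point is $\l_t/2$, occurring precisely when $\l_t$ is even; every remaining orbit has size two and is a pair $\{x,\l_t-x\}$.

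The only input from unrefinability is the following orbit-local observation, which does \emph{not} yet use maximality: in each size-two orbit $\{x,\l_t-x\}$ at most one element can be a missing part. Indeed, if both $x$ and $\l_t-x$ lay in $\mathcal{M}_\l$, then, being distinct (the orbit has size two), they would form a refinement $\l_t=x+(\l_t-x)$ of the part $\l_t\in\l$, contradicting $\l\in\mathcal{U}$.

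Next I would turn this inequality into an equality by counting. Summing the bound ``at most one missing part per size-two orbit'' over all orbits and adding the possible contribution of the fixed point gives $\#\mathcal{M}_\l\le (\text{number of size-two orbits})+\varepsilon$, where $\varepsilon=1$ if $\l_t$ is even and $\l_t/2\in\mathcal{M}_\l$, and $\varepsilon=0$ otherwise. A short parity check shows that the number of size-two orbits equals $\l_t/2-1$ when $\l_t$ is even and $(\l_t-1)/2$ when $\l_t$ is odd. Comparing with the extremal value $\#\mathcal{M}_\l=\lfloor \l_t/2\rfloor$ forces the bound to be attained: when $\l_t$ is odd every size-two orbit contains exactly one missing part, and when $\l_t$ is even the fixed point $\l_t/2$ must itself be a missing part and, again, every size-two orbit contains exactly one missing part. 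In either case each size-two orbit splits as exactly one element of $\mathcal{M}_\l$ and exactly one element of $P\subseteq\l$.

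Finally I would conclude. Fix $x\neq \l_t/2$ with $1\le x\le \l_t-1$; then $x$ lies in a size-two orbit $\{x,\l_t-x\}$ which, by the previous step, contains exactly one element of $\mathcal{M}_\l$ and exactly one element of $P$. Hence $x\in\l$ holds iff $x$ is the $P$-element of its orbit, iff its partner $\l_t-x$ is the missing one, iff $\l_t-x\in\mathcal{M}_\l$, which is the claim. I expect the main obstacle to be purely bookkeeping: correctly handling the fixed point $\l_t/2$ in the even case and matching the number of size-two orbits to $\lfloor \l_t/2\rfloor$ in each parity, so that ``at most one per orbit'' upgrades to ``exactly one per orbit''. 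This analysis also explains why the hypothesis $x\neq \l_t/2$ is indispensable: for even $\l_t$ the forced membership $\l_t/2\in\mathcal{M}_\l$ makes the stated equivalence fail exactly at the fixed point, where $x\notin\l$ yet $\l_t-x=\l_t/2\in\mathcal{M}_\l$.
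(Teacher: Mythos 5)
Your proof is correct and follows essentially the same route as the paper's: one direction comes from unrefinability (two distinct missing parts summing to $\l_t$ would refine it, i.e.\ at most one missing part per pair $\{x,\l_t-x\}$), and the other from the count $\#\mathcal{M}_{\l}=\left\lfloor \l_t/2\right\rfloor$ forcing exactly one missing part per pair; you merely make the paper's terse pigeonhole step explicit via the involution $x\mapsto \l_t-x$. As a small bonus, your careful handling of the fixed point in the even case also yields $\frac{\l_t}{2}\in\mathcal{M}_{\l}$, which the paper proves separately as Lemma~\ref{antisymm}.
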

\begin{proof}
    By the definition of unrefinable partitions, if $\l_t-x\notin\l$ then $x$ must be a part of $\l$. Conversely, suppose $x\in\l$ and $\l_t-x\in\l$. Since the number of missing parts is $\#\mathcal{M}_{\l}=\left\lfloor\frac{\l_t}{2}\right\rfloor$, then there must exist $y\notin\l$ such that $\l_t-y\notin\l$, which is a contradiction. 
\end{proof}

\begin{lemma}\label{antisymm}
    If $\l=(\l_1,\ldots,\l_t)\in\Use$, then $\frac{\l_t}{2}\notin\l$.
\end{lemma}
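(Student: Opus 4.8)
The plan is to split on the parity of $\l_t$. If $\l_t$ is odd then $\l_t/2$ is not an integer and hence cannot be a part, so the claim is immediate; thus I would assume throughout that $\l_t$ is even and write $\l_t = 2s$, so that the hypothesis $\l \in \Use$ gives $\#\mathcal{M}_{\l} = \lfloor \l_t/2 \rfloor = s$.

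The engine of the argument is Lemma \ref{buchispecchio} combined with a counting argument based on the involution $x \mapsto \l_t - x$ acting on $\{1,\dots,\l_t-1\}$, whose unique fixed point is $s = \l_t/2$. First I would record that among the integers $1,\dots,\l_t$ exactly $t$ are parts of $\l$ and $\#\mathcal{M}_{\l} = s$ are missing, whence $t = \l_t - s = s$. Next, for each two-element orbit $\{x,\l_t-x\}$ with $x \neq s$, Lemma \ref{buchispecchio} forces exactly one of $x$ and $\l_t - x$ to lie in $\l$ and the other in $\mathcal{M}_{\l}$; there are $s-1$ such orbits, each contributing exactly one part.

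To conclude I would count the parts of $\l$ directly: the maximal part $\l_t$ contributes one, the $s-1$ orbits contribute $s-1$, and this already accounts for $1 + (s-1) = s = t$ parts. Since $t = s$ is the total number of parts, no further part can come from the fixed point, and therefore $\l_t/2 = s \notin \l$. Equivalently, one can argue by contradiction: assuming $s \in \l$ would produce at least $s+1$ distinct parts among $\{1,\dots,\l_t\}$, contradicting $t = s$.

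The argument involves no genuine obstacle; the only point requiring care is the bookkeeping, namely making sure that the maximal part $\l_t$ and the fixed point $s$ are each counted exactly once and not conflated with the orbit contributions, and that the equality $\#\mathcal{M}_{\l} = \lfloor \l_t/2 \rfloor$ is used to pin down $t$ \emph{exactly} rather than merely to bound it as in Lemma \ref{lemma:bound}.
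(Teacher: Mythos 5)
Your proof is correct and follows essentially the same route as the paper: reduce to the even case, pair each $x\neq\l_t/2$ with $\l_t-x$ via Lemma \ref{buchispecchio}, and count. The only cosmetic difference is that you count parts (getting exactly $t=s$ from $\l_t$ and the $s-1$ orbits) while the paper counts missing parts (getting only $s-1$ from the orbits and forcing $\l_t/2$ to supply the $s$-th); these are complementary tallies of the same involution argument.
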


\begin{proof}
    If $\l_t$ is an odd number, then the element $\frac{\l_t}{2}$ is not an integer, which means that it cannot appear in $\l$.\\
    Otherwise, if $\l_t$ is even, then the interval $[1,\l_t-1]$ contains exactly $\frac{\l_t}{2}$ missing parts. Let $x\in[1,\frac{\l_t}{2}-1]$, by Lemma \ref{buchispecchio}, $x\notin\l$ implies $\l_t-x\in\l$, and conversely. Thus, in $[1,\frac{\l_t}{2}-1]\cup[\frac{\l_t}{2}+1,\l_t-1]$ there are at most $\frac{\l_t}{2}-1$ missing parts, hence $\frac{\l_t}{2}$ must be missing as well.
\end{proof}

\begin{lemma}\label{multiplo3}
    Let $\l\in\Use$ and let $\l_t$ be an odd integer such that $\l_t\neq3\mu_i$ for all $\mu_i\in\mathcal{M}_{\l}$. Then the numerical set $S_{\l}$ associated to $\l$ is a numerical semigroup.
\end{lemma}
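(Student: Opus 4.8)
The plan is to reduce the semigroup property to a single forbidden configuration and then rule it out using the mirror symmetry of $\Use$. First I would observe that, since $S_\l^c=\l$ has finite maximum $\l_t$ and $S_\l$ contains every integer exceeding $\l_t$, the set $S_\l$ fails to be a numerical semigroup precisely when some gap can be written as a sum of two positive elements of $S_\l$. A positive element of $S_\l$ not exceeding $\l_t$ is exactly a missing part $\mu\in\mathcal{M}_\l$ (the elements above $\l_t$ are too large to sum into a gap), so a failure means some part $\l_j$ equals $\mu_a+\mu_b$ with $\mu_a,\mu_b\in\mathcal{M}_\l$. If $\mu_a\neq\mu_b$ this is a refinement of $\l_j$ by two distinct missing parts, contradicting $\l\in\mathcal{U}$; hence the only possible obstruction is $\mu_a=\mu_b=\mu$, i.e.\ a part of the form $2\mu$. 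Thus it suffices to prove that $2\mu\notin\l$ for every $\mu\in\mathcal{M}_\l$.

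Next I would argue by contradiction, assuming $2\mu\in\l$ for some $\mu\in\mathcal{M}_\l$. The key input is that $\l_t$ is odd, so $\l_t/2$ is not an integer and Lemma~\ref{buchispecchio} applies to every relevant $x$: it becomes the clean statement that $x\mapsto\l_t-x$ interchanges parts and missing parts, i.e.\ $x\in\l\iff\l_t-x\in\mathcal{M}_\l$ for all $1\le x\le\l_t-1$. Since $2\mu\in\l$ and $2\mu\neq\l_t$ (again because $\l_t$ is odd), applying this to $x=2\mu$ gives $\l_t-2\mu\in\mathcal{M}_\l$; applying the equivalent form to $x=\mu$ gives $\l_t-\mu\in\l$.

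Now I would exhibit the refinement. The identity $\mu+(\l_t-2\mu)=\l_t-\mu$ expresses the part $\l_t-\mu\in\l$ as a sum of the two missing parts $\mu$ and $\l_t-2\mu$, both lying in $\{1,\dots,\l_t-1\}$ since $0<2\mu<\l_t$. This is a genuine refinement provided the two summands are distinct, and $\mu=\l_t-2\mu$ would force $\l_t=3\mu$. Here is exactly where the hypothesis $\l_t\neq3\mu_i$ for all $\mu_i\in\mathcal{M}_\l$ is used: it guarantees $\mu\neq\l_t-2\mu$, so the sum is a valid refinement of $\l_t-\mu$, contradicting unrefinability. Hence no part equals $2\mu$, and by the first paragraph $S_\l$ is a numerical semigroup.

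I expect the only delicate point to be the bookkeeping in the first paragraph, namely confirming that the sole way the semigroup law can fail is through a part of the form $2\mu$, since this rests on combining unrefinability with the fact that $S_\l$ already contains everything above $\l_t$. The remaining argument is a short symmetry computation, and the role of the hypothesis $\l_t\neq 3\mu_i$ becomes transparent once the refinement $\l_t-\mu=\mu+(\l_t-2\mu)$ is written down.
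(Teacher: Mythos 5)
Your proof is correct and takes essentially the same route as the paper's: reduce the failure of the semigroup law to the existence of a part of the form $2\mu$ with $\mu\in\mathcal{M}_\lambda$ (distinct summands being excluded by unrefinability), then use the mirror symmetry of Lemma~\ref{buchispecchio} to obtain $\lambda_t-2\mu\in\mathcal{M}_\lambda$ and $\lambda_t-\mu\in\lambda$, so that $\lambda_t-\mu=\mu+(\lambda_t-2\mu)$ is a forbidden refinement unless $\lambda_t=3\mu$. The only difference is that you make the initial reduction step explicit, which the paper leaves largely implicit.
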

\begin{proof}
    Suppose $S_{\l}$ is not a numerical semigroup. Then there exists $\mu_i\in\mathcal{M}_{\l}$ such that $k\mu_i\in\l$. Since $2\mu_i\notin\l$ implies that no multiples of $\mu_i$ belong to $\l$, it suffices to consider $k=2$. So $2\mu_i\in\l$. By Lemma \ref{buchispecchio}, this yields $\l_t-2\mu_i\notin\l$.\\
    If $\l_t-2\mu_i\neq\mu_i$, then $\l_t-2\mu_i+\mu_i=\l_t-\mu_i\notin\l$, contradicting Lemma \ref{buchispecchio}.Instead, if $\l_t-2\mu_i=\mu_i$, then $\l_t=3\mu_i$, which leads to another contradiction.
\end{proof}

\begin{example}
    Let $\l=(1,2,3,4,7,9,10,15)$ and $\mu=(1,2,3,5,7,9,11,15)$. Observe that $\#\mathcal{M}_{\l}=\#\mathcal{M}_{\mu}=7=\left\lfloor\frac{15}{2}\right\rfloor$, and hence $\l,\mu\in\mathcal{\Bar{U}}(15)$. However, $S_{\l}=\{0,5,6,8,11,12,13,14,16,\rightarrow\}$, since $10,15\notin S_{\l}$, is not a numerical semigroup, while $S_{\mu}=\{0,4,6,8,10,12,13,14,16,\rightarrow\}$ is.
\end{example}

\begin{corollary}
    Let $\l\in\Use$, such that $\l_t$ is an odd integer coprime with $3$, then the numerical set associated $S_{\l}$ is a numerical semigroup.
\end{corollary}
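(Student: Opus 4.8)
The plan is to derive this corollary directly from Lemma~\ref{multiplo3}, since the coprimality hypothesis is simply a clean sufficient condition for the divisibility constraint appearing there. Recall that Lemma~\ref{multiplo3} already establishes that, for $\l\in\Use$ with $\l_t$ odd, the associated numerical set $S_{\l}$ is a numerical semigroup \emph{provided} $\l_t\neq 3\mu_i$ for every $\mu_i\in\mathcal{M}_{\l}$. Thus the entire task reduces to verifying that the present hypotheses force this non-equality to hold automatically.

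First I would observe that $\gcd(\l_t,3)=1$ is equivalent to $3\nmid\l_t$. Next I would argue by contradiction on the forbidden equality: suppose there existed some $\mu_i\in\mathcal{M}_{\l}$ with $\l_t=3\mu_i$. Since $\mu_i$ is a positive integer, $3\mu_i$ is a multiple of $3$, so this would give $3\mid\l_t$, contradicting coprimality with $3$. Hence no such $\mu_i$ can exist, and the condition $\l_t\neq 3\mu_i$ holds for all $\mu_i\in\mathcal{M}_{\l}$.

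With both hypotheses of Lemma~\ref{multiplo3} now in place---namely $\l_t$ odd (given) and $\l_t\neq 3\mu_i$ for all $\mu_i$ (just verified)---I would invoke that lemma to conclude that $S_{\l}$ is a numerical semigroup, completing the proof. There is essentially no obstacle here: the corollary is a specialization in which an arithmetic condition on $\l_t$ alone is shown to subsume the element-wise condition of the lemma. The only point worth stating explicitly is that coprimality with $3$ is a condition intrinsic to $\l_t$, so it can be checked without inspecting the missing parts $\mathcal{M}_{\l}$ individually, which is precisely what makes it a convenient reformulation of the lemma's hypothesis.
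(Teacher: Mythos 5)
Your proof is correct and matches the paper exactly: the paper states that this corollary ``follows directly from'' Lemma~\ref{multiplo3}, and your argument simply makes explicit the (trivial) verification that $\gcd(\l_t,3)=1$ rules out $\l_t=3\mu_i$ for every $\mu_i\in\mathcal{M}_{\l}$. Nothing further is needed.
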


This result follows directly from \Cref{multiplo3}, but it is possible to state even a stronger version.

\begin{corollary}\label{ltprimo}
    Let $\l\in\Use$, such that $\l_t>3$ is a prime number, then the numerical set associated $S_{\l}$ is a numerical semigroup.    
\end{corollary}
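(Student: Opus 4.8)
The plan is to reduce the statement directly to \Cref{multiplo3}, which already does all the structural work. That lemma shows that whenever $\l\in\Use$ has $\l_t$ odd and satisfies $\l_t\neq 3\mu_i$ for every missing part $\mu_i\in\mathcal{M}_{\l}$, the associated numerical set $S_{\l}$ is a numerical semigroup. Hence the only thing left to verify is that a prime $\l_t>3$ automatically meets both hypotheses, after which the conclusion is immediate.

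First I would settle the parity hypothesis: any prime strictly greater than $3$ is odd, since $2$ is the unique even prime and $2<3$. Thus $\l_t$ is odd, as required by \Cref{multiplo3}. Next I would rule out the exceptional relation $\l_t=3\mu_i$. Arguing by contradiction, suppose $\l_t=3\mu_i$ for some $\mu_i\in\mathcal{M}_{\l}$; since $\mu_i\ge 1$ this forces $3\mid\l_t$. But $\l_t$ is prime, so the only prime divisible by $3$ is $\l_t=3$, contradicting $\l_t>3$. Therefore $\l_t\neq 3\mu_i$ for every $\mu_i\in\mathcal{M}_{\l}$, and both conditions of \Cref{multiplo3} hold, giving that $S_{\l}$ is a numerical semigroup.

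I expect no genuine obstacle here: the entire content is the elementary number-theoretic observation that a prime above $3$ is both odd and not a multiple of $3$, which is exactly the pair of conditions the preceding lemma requires. One could equally phrase the argument as noting that a prime $\l_t>3$ is an odd integer coprime with $3$, so the result is also a direct specialisation of the corollary stated just before \Cref{ltprimo}. I would present the proof in the compact form above, since the interesting mathematics lives in \Cref{multiplo3} and this corollary merely identifies primality as a convenient sufficient condition.
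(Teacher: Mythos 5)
Your argument is correct and matches the paper's own (implicit) justification: the paper presents this corollary as a strengthening of the preceding one, both following directly from \Cref{multiplo3}, exactly as you observe. Noting that a prime $\l_t>3$ is odd and not divisible by $3$ (so $\l_t\neq 3\mu_i$ is automatic) is precisely the intended reduction, so there is nothing to add.
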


\begin{remark}
    Note that the case $\l_t=3$ is excluded. That is because if we consider the partition $\l=(2,3)\in\Use$ the corresponding numerical set $S_{\l}$ is not a numerical semigroup
\end{remark}

\begin{theorem}\label{Ulm=SNS}
    Let $\l_t>3$ be a prime number. Then we have
    \[
    \#\Use=\#\left\{S\in SNS\mid F(S)=\l_t\right\}.
    \]
\end{theorem}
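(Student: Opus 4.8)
The plan is to show that the correspondence $\l\mapsto S_\l$ used throughout this section restricts to a bijection between $\Use$ and $\{S\in SNS\mid F(S)=\l_t\}$; since this equates two finite sets, the stated equality of cardinalities follows. Recall that the assignment of a numerical set to a partition into distinct parts (and its inverse, taking the gaps of a numerical set to be the parts) is a bijection, so I only need to check two things: that $\l\mapsto S_\l$ sends $\Use$ into the symmetric semigroups of Frobenius number $\l_t$, and that the gap-partition of such a semigroup lands back in $\Use$.

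The arithmetic backbone of both directions is the identity $\mathrm{len}(\l)+\#\mathcal{M}_\l=\l_t$, coming from the disjoint decomposition $\{1,\dots,\l_t\}=\{\l_1,\dots,\l_t\}\sqcup\mathcal{M}_\l$. Translating through the correspondences established here, $\mathrm{len}(\l)=G(S_\l)$ and $\l_t=F(S_\l)$, so $\#\mathcal{M}_\l=F(S_\l)-G(S_\l)$. For the forward direction I would take $\l\in\Use$ with $\l_t>3$ prime: since an odd prime is coprime to $3$, Corollary \ref{ltprimo} guarantees that $S_\l$ is genuinely a numerical semigroup, with odd Frobenius number $F(S_\l)=\l_t$. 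The defining condition $\#\mathcal{M}_\l=\lfloor\l_t/2\rfloor=\tfrac{\l_t-1}{2}$ then forces $G(S_\l)=\tfrac{\l_t+1}{2}=\tfrac{F(S_\l)+1}{2}$, which by the genus characterisation of symmetric semigroups is exactly membership in $SNS$. For the converse I would take $S\in SNS$ with $F(S)=\l_t$ and let $\l$ be its gap-partition: by Proposition \ref{nscontu} $\l$ is unrefinable, its maximal part is $F(S)=\l_t$, and $\mathrm{len}(\l)=G(S)=\tfrac{\l_t+1}{2}$ by symmetry, whence $\#\mathcal{M}_\l=\l_t-\tfrac{\l_t+1}{2}=\lfloor\l_t/2\rfloor$ and $\l\in\Use$. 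These two constructions are inverse restrictions of the same bijection, giving the claim.

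The essential point, and the only place where primality is genuinely used, is ensuring that the image $S_\l$ is an honest numerical semigroup rather than merely a numerical set; this is what Corollary \ref{ltprimo} supplies and what fails in general (for instance, for $\l_t=15$ there are partitions in $\mathcal{\Bar{U}}(15)$ whose associated numerical set is not a semigroup, so they correspond to no element of $SNS$). Once that is secured, the identification of ``maximal number of missing parts'' with ``genus equal to $\tfrac{F+1}{2}$'' is an immediate consequence of the counting identity together with the genus characterisation of symmetry, and the remaining verifications (distinctness of parts, $\mathrm{len}(\l)\ge 2$, and matching Frobenius numbers) are routine.
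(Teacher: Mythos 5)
Your proposal is correct and follows essentially the same route as the paper, which simply cites Corollary \ref{ltprimo} (primality forces $S_\l$ to be a genuine numerical semigroup) together with the characterisation of symmetry; your writeup additionally makes explicit the counting identity $\mathrm{len}(\l)+\#\mathcal{M}_\l=\l_t$ and the genus criterion $G(S)=\frac{F(S)+1}{2}$, as well as the converse direction, all of which the paper leaves implicit. You also correctly isolate primality as being needed only in the forward direction.
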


\begin{proof}
    The claim follows from \Cref{ltprimo} and Definition \ref{SNS}.
\end{proof}

A decomposition of $\mathcal{U}(\l_t)$ via the minimal excluded part  is now introduced.\\
Let 
$$
\begin{aligned}    
&\mathcal{U}(\l_t,\mu_1)=\{\l\in\mathcal{U}(\l_t)\mid \mu_1=\mathrm{mex}(\l)\},\\
&\mathcal{\Bar{U}}(\l_t,\mu_1)=\left\{\l\in\mathcal{\Bar{U}}(\l_t)\mid\mu_1=\mathrm{mex}(\l)\right\}
\end{aligned}
$$
Given $\l\in\mathcal{\Bar{U}}(\l_t)$ with $\l_t$ prime number and $\Vec{p}_{\l}$ its vector of forbidden integers  (see Definition \ref{plambda}), Theorem \ref{Ulm=SNS} ensures that $S_{\l}$ is a numerical semigroup. Then $(\Vec{p}_{\l})_1=2\mu_1$ and the other components are the minimal integers not in $\l$, congruent to $i$ $(\mod{\mu_1})$ for $2\leq i\leq\mu_1$.\\
If there exists $1\leq i\leq\mu_1$ such that $(\Vec{p}_{\l})_i\neq(\Vec{p}_{\l})_j+(\Vec{p}_{\l})_k$ for $1\leq j<k\leq \mu_1$ such that $j,k\neq i$, a new unrefinable partition $\l^*\in\mathcal{U}(\l_t,\mu_1)$ such that $\l^*=\l\cup\left\{(\Vec{p}_{\l})_i\right\}$ may be obtained. The vector $\Vec{p}_{\l^*}$ changes from $\Vec{p}_{\l}$ only in the position $i$, i.e., $(\vec{p}_{\l^*})_i=(\vec{p}_{\l})_i+\mu_1$.

\begin{example}
    Let $\l_t=13$ and $\mu_1=3$. Then $\mathcal{\Bar{U}}(\l_t,\mu_1)=\{\l\}$, where $\l=(1,2,4,5,7,10,13)$ and $\Vec{p}_{\l}=(6,16,8)$. Since $13\equiv 1\pmod{3}$, all the lower integers in the same congruence class modulo $3$ are in $\l$.
   The integers in the other congruence classes modulo $\mu_1$ can be added to $\l$ while preserving unrefinability. This procedure produces new unrefinable partitions (see \cref{reticolo}).
\end{example}
\begin{figure}    
    $$
    \begin{tikzcd}[scale=2.5]
    {\{8,11\}} \arrow[r]      & {\{6,8,11\}} \arrow[r]        & {\{6,8,9,11\}} \arrow[r]        & {\{6,8,9,11,12\}}        \\
    \\
    \{8\} \arrow[r] \arrow[uu] & {\{6,8\}} \arrow[uu] \arrow[r] & {\{6,8,9\}} \arrow[r] \arrow[uu] & {\{6,8,9,12\}} \arrow[uu] \\
    \\
    \{0\} \arrow[uu] \arrow[r] & \{6\} \arrow[uu] \arrow[r]     & {\{6,9\}} \arrow[uu] \arrow[r]   & {\{6,9,12\}} \arrow[uu]  
    \end{tikzcd}
    $$
    \caption{Given $\l=\{1,2,4,5,7,10,13\}$, any directed path from $\{0\}$ to $\{6,8,9,11,12\}$ gives a sequence of integers that, added to $\l$, keep it an unrefinable partition. These are all the unrefinable partitions such that $\l_t=13$ and $\mu_1=3$.}
    \label{reticolo}
\end{figure}
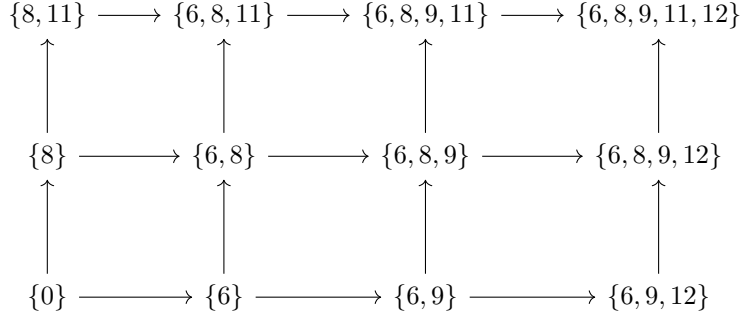

For completeness, an additional result is included. Its proof is straightforward, though its interpretation is less immediate, as it follows from the classification results presented in \cite{ACCL2} and \cite{ACC}. This result may be useful for future research.

\begin{definition}\label{def:maxunref}
Let $N \in \mathbb N$. An unrefinable partition $\l=(\l_1,\ldots,\l_t)$ of  $N$ is called \textbf{maximal} if  
\[
\l_t = \max_{(\eta_1, \eta_2, \dots, \eta_t) \in \mathcal{U}_N}  \eta_t.
\]
Denote by ${\mup{N}}$ the set of the maximal unrefinable partitions of $N$.
\end{definition}

\begin{proposition}
    Let $\widehat{\mathcal{U}}$ be the subset of $\widetilde{\mathcal{U}}$ defined as follows
    \[
    \begin{split}
    \widehat{\mathcal{U}}=& \left\{(1,\ldots,2k-3,4k-6)\mid k\geq4 \right\}\\
    & \bigcup\left\{(1,\ldots,2k-2,4k-5)\mid k\geq4\right\}\bigcup \{\wtp\mid n\geq6\}.
    \end{split}
    \]
    The subset of maximal unrefinable partitions $\Uti\setminus\widehat{\mathcal{U}}$ is contained in the set of unrefinable partitions with maximal number of missing parts $\mathcal{\Bar{U}}$, in other words
    \[
    \left(\Uti\setminus\widehat{\mathcal{U}}\right)\subseteq\Bar{\mathcal{U}}
    \]
\end{proposition}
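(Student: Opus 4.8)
The plan is to reduce the inclusion to a finite case analysis driven by the explicit classification of maximal unrefinable partitions obtained in \cite{ACCL2} and \cite{ACC}. Those papers describe, for every $N$, each element of $\widetilde{\mathcal{U}}_N$ as one of a small number of parametrised shapes, the cases being distinguished by the position of $N$ relative to the triangular numbers (i.e. by $T_{n-1}<N\le T_n$) and by a parity condition. Since membership in $\overline{\mathcal{U}}$ is governed by the single numerical equality $\#\mathcal{M}_{\l}=\lfloor \l_t/2\rfloor$ — that is, by Lemma~\ref{lemma:bound} being attained with equality — it suffices to traverse the classification family by family, read off $\l_t$, count $\#\mathcal{M}_{\l}$, and test this equality.

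The core of the argument is to verify that every family in the classification other than the three appearing in $\widehat{\mathcal{U}}$ meets the bound. For each such shape I would compute $\l_t$ and $\#\mathcal{M}_{\l}$ explicitly and check $\#\mathcal{M}_{\l}=\lfloor \l_t/2\rfloor$; once equality holds, Lemma~\ref{lemma:bound} guarantees that this is the maximal possible number of missing parts, so the partition lies in $\overline{\mathcal{U}}$. Ranging over all $N$ and all non-exceptional families then yields exactly $\widetilde{\mathcal{U}}\setminus\widehat{\mathcal{U}}\subseteq\overline{\mathcal{U}}$.

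To explain why the three families of $\widehat{\mathcal{U}}$ must be excluded, the same counting shows they fall one short. For $\l=(1,\ldots,2k-3,4k-6)$ the present parts are $\{1,\ldots,2k-3\}\cup\{4k-6\}$, so $\l_t=4k-6$ and the missing parts form the block $\{2k-2,\ldots,4k-7\}$, giving $\#\mathcal{M}_{\l}=2k-4$, whereas $\lfloor \l_t/2\rfloor=2k-3$. For $\l=(1,\ldots,2k-2,4k-5)$ one finds $\#\mathcal{M}_{\l}=2k-4$ and $\lfloor \l_t/2\rfloor=2k-3$ as well. The partition $\wtp$ for $n\ge 6$, being the maximal unrefinable partition of the triangular number $T_n$, is handled by the same direct count once its explicit shape from \cite{ACCL2} is substituted, and likewise misses the bound by one. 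These computations confirm that $\widehat{\mathcal{U}}$ is precisely the set of maximal unrefinable partitions failing the equality, which is the content behind the inclusion.

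I expect the main obstacle to be bookkeeping rather than conceptual depth: one must reproduce the classification of \cite{ACC} faithfully, control every case split (especially the parity of $\l_t$, which shifts the value of $\lfloor \l_t/2\rfloor$), and match the three excluded families against the specific classification cases that fail, while confirming that no other case fails. Keeping the small-index boundary conditions ($k\ge 4$ and $n\ge 6$) and the triangular-versus-non-triangular split consistent is where the care lies.
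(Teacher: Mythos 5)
Your proposal is correct and follows essentially the same route as the paper: both arguments traverse the classification of maximal unrefinable partitions from \cite{ACCL2} and \cite{ACC} family by family, count $\#\mathcal{M}_{\l}$ against $\lfloor \l_t/2\rfloor$, and observe that exactly the three families collected in $\widehat{\mathcal{U}}$ fall one short of the bound while every remaining family attains it. Your explicit counts for the excluded families ($2k-4$ versus $2k-3$, and $n-3$ versus $n-2$ for $\wtp$) agree with the paper's.
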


\begin{proof}
    In \cite{ACCL2} and in \cite{ACC} the authors construct maximal unrefinable partitions. In particular in \cite[Theorem $4.1$]{ACCL2}, the maximal unrefinable partition $\widetilde{\pi}_n=(1,\ldots,n-3,n+1,2n-4)$ is obtained from $\pi_n$ removing three parts and adding two new elements. Then $\#\mathcal{M}_{\widetilde{\pi}_n}=n-3<n-2=\left\lfloor\frac{2n-4}{2}\right\rfloor$. Hence, for every $n\geq6$ the partition $\widetilde{\pi}_n\notin\Bar{\mathcal{U}}$.\\
    In the first part of \cite[Proposition $2.9$]{ACC}, when $n$ is an odd integer, the maximal unrefinable partition $\l=(1,\ldots,n-2,2n-4)$ of $T_{n,3}$ is such that $\#\mathcal{M}_{\l}=n-3<n-2$, hence $\l\notin\Bar{\mathcal{U}}$.\\
    Similarly, in \cite[Proposition $2.12$]{ACC} when $n$ is an even number, the maximal unrefinable partition $\eta=(1,n-2,2n-5)$ for $T_{n,4}$ has $n-4$ missing parts, while $\left\lfloor\frac{2n-5}{2}\right\rfloor=n-3$, so $\eta\notin\Bar{\mathcal{U}}$.\\
    All the others maximal unrefinable partitions obtained in \cite[Theorem $4.1$]{ACCL2}, \cite[Proposition $2.4$, $2.6$, $2.8$, $2.9$, $2.11$, $2.12$]{ACC} have the maximal number of missing parts.
\end{proof}

In conclusion, this work establishes several relationships between unrefinable partitions and numerical semigroups. A future research could be devoted to understand such connection more deeply and finding generating function for $\mathcal{\Bar{U}}(\l_t)$.

\bibliographystyle{amsalpha}
\bibliography{references}

\end{document}